\documentclass[12pt]{article}
\usepackage[utf8]{inputenc}
\usepackage{amsmath, amssymb, amsthm, mathtools}
\usepackage{bbold,tabularx}
\usepackage{enumerate,float,subcaption,xcolor,multirow}
\usepackage{tikz, tikzpagenodes}
\usetikzlibrary{through,intersections}
\usepackage{tikz-cd}
\mathtoolsset{centercolon}
\usepackage{xurl}

\usepackage[T1]{fontenc}
\usepackage{verbatim}
\usepackage[margin=3cm]{geometry}
\usepackage[algo2e,ruled,vlined]{algorithm2e}
\usepackage{algorithm}
\usepackage{here}
\usepackage{setspace}
\usepackage[normalem]{ulem}
\usepackage{algpseudocode}
\usepackage{dsfont}
\usepackage{url}
\usepackage[inline]{enumitem}
\usepackage{microtype}
\usepackage[affil-sl]{authblk}
\usepackage{graphicx}
\usepackage{nicematrix}
\usepackage{nicefrac}
\usepackage{amsfonts,latexsym,graphics}
\usepackage{color}
\usepackage[hidelinks]{hyperref}
\usepackage{adjustbox}
\usepackage{booktabs}
\usepackage{makeidx}
\usepackage{bm}

\usepackage{tikz}
\usetikzlibrary{tikzmark}
\usetikzlibrary{positioning}
\usetikzlibrary{graphs}
\usetikzlibrary{graphs.standard}
\usetikzlibrary{decorations.pathmorphing}
\usetikzlibrary{decorations.text}
\usetikzlibrary{arrows.meta}
\pgfdeclarelayer{bg}    % declare background layer
\pgfsetlayers{bg,main}  % set the order of the layers (main is the standard layer)

\usepackage{xspace}         % must be loaded last

\newcommand{\norm}[2]{\| #1 \|_{#2}}
\newcommand{\define}{\coloneqq}

\makeatletter
\newcommand{\rref}[2]{\hyperref[#2]{{#1}~\ref*{#2}}}

\usepackage{amsfonts,amsthm,amsmath,mathtools,amssymb,comment,xcolor,enumitem}
\usepackage{mathtools}
\usepackage{soul}

\theoremstyle{plain}
\newtheorem{thm}{Theorem}[section]

\newtheorem{lem}{Lemma}[section]
\newtheorem{cor}{Corollary}[section]

\theoremstyle{definition}
\newtheorem{df}{Definition}[section]
\newtheorem{rem}{Remark}[section]
\newtheorem{ex}{Example}[section]

\newtheorem{propo}{Proposition}[section]

\newcommand{\allone}{\mathbf{1}}

\newcommand\tran{\mkern-2mu\raise1.25ex\hbox{$\scriptscriptstyle\top$}\mkern-3.5mu}

\def\G{\Gamma}

\def\diag{\mathop{\rm diag }\nolimits}

\def\matrix0{{\mbox {\boldmath $O$}}}

\def\j{{\mbox{\boldmath $1$}}}
\def\vec0{\mbox{\bf 0}}

\newcommand{\revA}[2]{{\color{black}#2}}
\newcommand{\revC}[2]{{\color{black}#2}}

\title{An \textcolor{black}{algebraic-combinatorial} framework for finding the average hitting times in graphs with high regularity}

\author{Aida Abiad\thanks{\texttt{a.abiad.monge@tue.nl}, Department of Mathematics and Computer Science, Eindhoven University of Technology, The Netherlands} \thanks{Department of Mathematics and Data Science of Vrije Universiteit Brussel, Belgium} \qquad \qquad Yusaku Nishimura\thanks{\texttt{n2357y@ruri.waseda.jp}, School of Fundamental Science and Engineering, Waseda University, Tokyo, Japan} }

\date{}

\begin{document}

\maketitle

\begin{abstract}
For any given vertices $u$ and $v$ in a graph, the hitting time of a random walk on a finite graph is the number of steps it takes for a random walk to reach vertex $v$ starting at vertex $u$. The expected value of the hitting time is the average hitting time. In this paper, we present an \textcolor{black}{algebraic-combinatorial} method for calculating the average hitting time between vertices of finite graphs exhibiting high regularity, along with its applications to multiple graph classes. \textcolor{black}{Our approach exploits a novel connection between maximal-entropy random walks and weight-equitable partitions, providing a unifying framework that strengthens and extends several known results, including Rao’s method [\emph{Statistics \& Probability Letters}, 2013] for computing the hitting time from a vertex to a neighbor under certain symmetries of the starting vertex.} \\

\noindent{\bf Keywords:} average hitting time, maximal entropy random walk, weight-equitable partition, graph
\end{abstract}

%Submitted to (2025-10-08): https://amc-journal.eu/index.php/amc/login 

%%%%%%%%%%%%%%%%%%%%%%%%%%%%%%%%%%%%%%%%%%%%%%%%%%%%%%%%%%%%%%%%%%%%%%%%%%%%%%%%%%%%%%%%%%
\section{Introduction}
%%%%%%%%%%%%%%%%%%%%%%%%%%%%%%%%%%%%%%%%%%%%%%%%%%%%%%%%%%%%%%%%%%%%%%%%%%%%%%%%%%%%%%%%%%

A \emph{random walk} on a graph is a stochastic process that moves randomly from vertex to vertex along the edges. Specifically, when each vertex
is chosen with equal probability during a random walk, it is referred to as a \emph{simple random walk}. If a graph $G$ has weighted vertices, then the transition probability $t_{vu}$ from vertex $u$ to vertex $v$ of a simple random walk in $G$ is defined as 
  \[
    t_{vu}=\frac{w(v)}{\sum_{x\in G(u)} w(x)},
  \]
  where $w(v)$ is \revC{}{the} weight of $v$ and $G(u)$ are the neighbors of $u$.
For any given vertices $u$ and $v$ in a graph,
the \emph{hitting time} from vertex $u$ to vertex $v$ is defined as the number of steps required to reach vertex $v$ for the first time in a simple random
walk starting from vertex $u$. The expected value of the hitting time from vertex $u$ to vertex $v$ is called the \emph{average hitting time} from $u$ to $v$, and it is also referred to as the \emph{mean first passage time} from $u$ to $v$. We denote it as $H(G;(v,u))$, and we define $H(G;(v,v)) = 0$. We should note that a random walk that is weighted based on the eigenvectors of the adjacency matrix of a graph is denoted in the literature as the \emph{maximal entropy random walk}, \revC{see \cite{BDLW2009,DL2011,D2012,NLWY2018,OB2013,SGLNL2011}.}{, and it has been studied, among others, by Burda, Duda, Luck, and Waclaw~\cite{BDLW2009}, Devenne and Libert~\cite{DL2011}, Duda~\cite{D2012}, Niu, Liu, Wang and Yan~\cite{NLWY2018}, Ochab and Burda~\cite{OB2013}, and Sinatra, G\'omez-Garde\~nes, Lambiotte, Nicosia, and Latora~\cite{SGLNL2011}. }

%The average hitting time of a random walk has many useful properties. For example, \revC{the cover time, the expected time it takes for a random walk to visit all vertices of a graph, can be both bounded above by a function of the largest hitting time from one vertex to another, and below by a function of the smallest hitting time from one vertex to another \cite{L1993}. {Lov\'asz \cite{L1993} showed that the cover time (that is, the expected time it takes for a random walk to visit all vertices of a graph), can be both bounded above by a function of the largest hitting time from one vertex to another, and below by a function of the smallest hitting time from one vertex to another.} However, the average hitting times between vertices of various graphs can be hard to analyze, and finding their values is not intuitive. Despite this, the average hitting time between any two vertices has been found for several graph classes. 

\textcolor{black}{The average hitting time of a random walk exhibits several useful properties. For instance, Lovász \cite{L1993} showed that the cover time (the expected time for a random walk to visit all vertices of a graph) can be bounded both above, in terms of the largest hitting time between any pair of vertices, and below, in terms of the smallest such hitting time. Nevertheless, computing the average hitting times between vertices in general graphs remains a challenging problem. Despite this, exact expressions for the average hitting time have been determined for several specific classes of graphs.} Regarding graph classes with high regularity, potential theory methods have been used to derive average hitting times for distance-regular graphs by Biggs \cite{B1993} (and independently by Devroye and Sbihi \cite{DS1990}, and Van Slijpe \cite{S1984}) and for distance-biregular graphs by \revC{Carmona et al.}{Carmona, Encinas and Jiménez}~\cite{cej2022}. Nishimura \cite{N2023} introduced $f$-equitable graphs, which are yet another generalization of distance-regular graphs, and showed that their average hitting times can be calculated using the entries of a certain quotient matrix.
Rao \cite{R2013} showed a method \revC{which}{that} finds the average hitting time from one vertex to a neighbor when the graph exhibits certain symmetry on the starting vertex, and applied this method to random walks on a variety of classes of graphs\revC{}{,} including grids, hypercubes, and trees.
Recently, Tanaka~\cite{T2024, T2024-2} derived explicit formulas for the average hitting times of certain Cayley graphs.

\textcolor{black}{
In this paper, we introduce a general algebraic-combinatorial framework for deriving the average hitting time of various graph classes satisfying certain regularity conditions, thereby unifying and extending several results previously appearing in the literature. In particular, our approach includes and generalizes known results for distance-regular and distance-biregular graphs (such as Van Slijpe~\cite{S1984}, Devroye and Sbihi~\cite{DS1990}, Biggs~\cite{B1993}, Carmona, Encinas, and Jiménez~\cite{cej2022}), as well as results for $f$-equitable graphs (such as Nishimura~\cite{N2023}). Moreover, we strengthen Rao’s theorem~\cite{R2013}, extending its applicability to additional graph families, including cone graphs and graphs arising from symmetric association schemes, and along the way we correct an error in the original statement of \cite[Theorem 2.1]{R2013}.
}

\textcolor{black}{Our methodology is based in a novel connection between maximal-entropy random walks and the spectral theory of the socalled weight-equitable partitions of graphs. The latter has proven to be a powerful theory for extending a variety of spectral results (see, e.g., Abiad~\cite{A2019}, Abiad, Hojny, and Zeijlemaker~\cite{AHZ2022}, Fiol~\cite{F1999}, Fiol and Garriga~\cite{FG1999}, Haemers~\cite{H1995}, Lee and Weng~\cite{LW2012}). This paper thus presents yet another application of the mentioned spectral machinery. We demonstrate the effectiveness of our algebraic-combinatorial framework by explicitly computing the average hitting times for several graph classes exhibiting certain regularity properties.
}

This paper is organized as follows.
In Section \ref{sec:pre}, we provide the definition of $f$-equitable graphs and discuss some of their properties related to simple random walks. In Section \ref{sec:weightequitable}, we propose \revC{}{a} general algebraic \revC{ccombinatorial}{combinatorial} framework to obtain the average hitting times for classes \revC{}{of} graphs having high regularity. As a tool\revC{}{,} we use weight-equitable partitions of graphs. 
In particular, Theorems~\ref{thm:equiW} and~\ref{thm:equi} are the main results of this paper.
In Section \ref{sec:pseudodistanceregularized}\revC{}{,} we illustrate multiple applications of the proposed method to derive the average hitting time for several graph classes such as distance-regularized graphs, $t$-distance-regular graphs\revC{}{,} and cone graphs.

Our results suggest that weight-equitable partitions could play an important role in the analysis of the maximal entropy random walk.

%%%%%%%%%%%%%%%%%%%%%%%%%%%%%%%%%%%%%%%%%%%%%%%%%%%%%%%%%%%%%%%%%%%%%%%%%%%%%%%%%%%%%%%%%%
\section{Preliminaries}\label{sec:pre}
%%%%%%%%%%%%%%%%%%%%%%%%%%%%%%%%%%%%%%%%%%%%%%%%%%%%%%%%%%%%%%%%%%%%%%%%%%%%%%%%%%%%%%%%%%

%%%%%%%%%%%%%%%%%%%%%%%%%%%%%%%%%%%%%%%%%%%%%%%%%%%%%%%%%%%%%%%%%%%%%%%%%%%%%%%%%%%%%%%%%%
\subsection{Simple random walk and maximal entropy random walk}
%%%%%%%%%%%%%%%%%%%%%%%%%%%%%%%%%%%%%%%%%%%%%%%%%%%%%%%%%%%%%%%%%%%%%%%%%%%%%%%%%%%%%%%%%%

In this paper, we consider finite and connected graphs. We denote the identity matrix as $I$, all-one vector as $\bold{\allone}$, 
\revA{$v_x$ as the $x$th entry of vector $\bold{v},$ and $A_{-x}$ as the submatrix obtained by removing the $x$th row and the $x$th column from $A$.
Let $A=(a_{ij})$ be a square matrix \revC{with}{of} size $n$.}
{$v_x$ as the $x$th entry of vector $\bold{v}.$
Let $A=(a_{ij})$ be a square matrix with size $n$.
We define $A_{-x}$ as the submatrix obtained by removing the $x$th row and the $x$th column from $A$.}

Define $T(A)$ as a matrix whose $ij$-entry is $\frac{a_{ij}}{\sum_{l=1}^{n}a_{lj}}$, and \revC{}{define $\bold{H}(A)$ as a \emph{hitting time vector of $A$} as follows:}
\[
  \bold{H}(A)=-\bold{\allone}(A-I)^{-1}.
\]

\begin{df}[Simple random walk]
  Let $G$ be a graph with adjacency matrix $A$.
  A \emph{simple random walk} on a graph is a stochastic process that moves randomly from vertex to vertex along the edges whose transition probability is $T(A)$.
\end{df}

Note that we sometimes consider the case when \revC{}{the} graph has \revC{a}{} loop edges, directed edges, and \revC{has}{} a weight \revC{in}{on} edges.
\revC{Although in}{In} these cases, \revC{}{the} graph is always connected.

Next, we define the other random walk called maximal entropy random walk.

Let $G$ be a graph with adjacency matrix $A$. 
Since~ we assume graphs to be connected (so~$A$ is irreducible), the Perron-Frobenius Theorem assures that~$\lambda_{1}$ is simple, positive, and has a positive eigenvector. 
  Moreover, it is the only eigenvalue that admits such an eigenvector. 
  We denote this eigenvector by~$\revC{\nu}{\boldsymbol{\nu}}=(\nu_{1},\ldots,\nu_{n})^{\top}$ and refer to it as the \emph{Perron eigenvector}. 
  It is assumed to be normalized such that~$\revC{\norm{\nu}}{\norm{\boldsymbol{\nu}}{}} = 1$. 
  For example, if~$G$ is regular, we have~$\revC{\nu}{\boldsymbol{\nu}}=\frac{1}{\sqrt{n}}\j$.
  Maximal entropy random walk is defined by the Perron eigenvector.

  \begin{df}[Maximal entropy random walk]
    Let $G$ be a graph with adjacency matrix $A$ and let $\revC{\nu}{\boldsymbol{\nu}}=(\nu_{1},\ldots,\nu_{n})^{\top}$ be a Perron eigenvector of $A$.
    Define $D_\nu=\diag(\nu_1,\ldots,\nu_n)$.
    Then, a \emph{maximal entropy random walk} on a graph $G$ is a stochastic process that moves randomly from vertex to vertex along the edges whose transition probability is $T(D_\nu A)$.
  \end{df}

  Maximal entropy random walks were introduced \revC{in \cite{BDLW2009}}{by Burda, Duda, Luck, and Waclaw~\cite{BDLW2009}} and have attracted a lot of attention since then, \revC{see e.g. \cite{DL2011,D2012,NLWY2018,OB2013,SGLNL2011}}
  {see e.g. Delvenne and Libert~\cite{DL2011}, Duda~\cite{D2012}, Niu, Liu, Wang and Yan~\cite{NLWY2018}, Ochab and Burda~\cite{OB2013}, and Sinatra, G\'omez-Garde\~nes, Lambiotte, Nicosia, and Latora~\cite{SGLNL2011}}.

\begin{rem}
    Note that a maximal entropy random walk in some graphs is often equivalent to a simple random walk.
    Let $G$ be a graph with adjacency matrix $A$, and let $D=\diag(\deg(v_1),\ldots,\deg(v_n))$.
    The transition probability matrix of a simple random walk on $G$ is given by $T(A) = AD^{-1}$,
while the transition probability matrix of a maximal entropy random walk on $G$ is $T(D_\nu A) = \frac{1}{\lambda_1} D_\nu A D_\nu^{-1}$.
    Therefore, when $\frac{1}{\lambda_1}D_\nu AD_\nu^{-1}=AD^{-1}$, the simple random walk and maximal entropy random walk act as the same stochastic process. 
    For example, \revC{}{a} regular graph holds the above equation.
\end{rem}

From these random walks, the average hitting times are also defined.

\begin{df}[The average hitting time]
  The \emph{hitting time} by the simple (maximal entropy) random walk from vertex $u$ to vertex $v$ is defined as the number of steps required to reach vertex $v$ for the first time in a simple (maximal entropy) random walk starting from vertex $u$.
  The expected value of the hitting time from vertex $u$ to vertex $v$ is called the \emph{average hitting time} by the simple(maximal entropy) random walk from $u$ to $v$. 
\end{df}
We denote the average hitting times from $u$ to $v$ by simple or maximal entropy random walk as $H(G;(v,u))$ and $H_m(G;(v,u))$, respectively. 
We also define $H(G;(v,v)) = H_m(G;(v,v)) = 0$.

It is known that 
\begin{equation}\label{eq:ht}
H(G;(v,u))={\bold{H}(T(A)_{-v})}_u.  
\end{equation}
However, in general, if the adjacency matrix of a graph is not known, then an explicit formula for the average hitting times is also unknown. 
Rao~\cite{R2013} showed that if a graph satisfies a certain condition related to automorphisms, then the average hitting time from a specific vertex to another can be computed without using the adjacency matrix. 
Moreover, in the case of distance-regular graphs, the average hitting times are determined by their intersection numbers (\revC{\cite{B1993,DS1990,N2023,S1984}}{Van Slijpe~\cite{S1984}, Devroye and Sbihi~\cite{DS1990}, Biggs~\cite{B1993}, and Nishimura~\cite{N2023}}).

%%%%%%%%%%%%%%%%%%%%%%%%%%%%%%%%%%%%%%%%%%%%%%%%%%%
\subsection{(Weight) equitable partitions}
%%%%%%%%%%%%%%%%%%%%%%%%%%%%%%%%%%%%%%%%%%%%%%%%%%%
Let $G$ be a graph and let $P=\{V_0,V_1,\ldots,V_{r}\}$ be a partition of the vertex set of $G$.

\begin{df}[Equitable partition]\label{def:equitable}
  The partition $P$ is called an \textit{equitable partition} if, for all integers $i$ and $j$ such that $0\leq i,j\leq r$ and for all vertices $u$ and $v$ in $V_j$, 
  $|G(u)\cap V_i|=|G(v)\cap V_i|$ holds.
\end{df}

\begin{df}[Quotient matrix, Quotient graph]
  Let $P$ be an equitable partition of $G$.
  \revA{A matrix $b_{ij}=|G(v)\cap V_i|$, where $v$ is for any vertex in $V_j$, }
  {A matrix $B=(b_{ij})$ whose $(i,j)$-entry is $|G(v)\cap V_i|$, where $v$ is an arbitrary vertex in $V_j$, }
  is called a \textit{quotient matrix} of $P$. Furthermore, the graph whose vertex set is $P$ and whose adjacency matrix is a quotient matrix is called the \textit{quotient graph} of $P$.
\end{df}
Note that the quotient graph is a directed graph whose edges may have weights and may also include loops\revA{.}
{, in which the weight of the edge $(j,i)$ is given by $b_{ij}$.}

\begin{ex}
For example, let $G$ be the graph shown on the left of Figure~\ref{fig:cube}, and suppose the vertex subsets are  
$V_0=\{v_0\}$, $V_1=\{v_1,v_2,v_4\}$, $V_2=\{v_3,v_5,v_6\}$, and $V_3=\{v_7\}$, with the partition  
$P=\{V_0,V_1,V_2,V_3\}$. Then $P$ is equitable, and the quotient graph of $P$ is the graph shown on the right of Figure~\ref{fig:cube}.

    \begin{figure}[t]
      % \centering
\begin{tikzpicture}
            % 頂点の座標
          \coordinate (A) at (1,1);
          \coordinate (B) at (2,1);
          \coordinate (C) at (2,2);
          \coordinate (D) at (1,2);
          \coordinate (E) at (0,0);
          \coordinate (F) at (3,0);
          \coordinate (G) at (3,3);
          \coordinate (H) at (0,3);
        
          % 線で頂点を結んで立方体を描画
          \draw (A) -- (B) -- (C) -- (D) -- cycle; % 底面
          \draw (E) -- (F) -- (G) -- (H) -- cycle; % 上面
          \draw (A) -- (E);
          \draw (B) -- (F);
          \draw (C) -- (G);
          \draw (D) -- (H);
        
          % 頂点のラベル
          \fill[black] (D) circle (2pt) node[left,font=\small] {$v_3$};
          \fill[black] (E) circle (2pt) node[left,font=\small] {$v_0$};
          \fill[black] (C) circle (2pt) node[right,font=\small] {$v_7$};
          \fill[black] (A) circle (2pt) node[left,font=\small] {$v_1$};
          \fill[black] (B) circle (2pt) node[right,font=\small] {$v_5$};
          \fill[black] (F) circle (2pt) node[right,font=\small] {$v_4$};
          \fill[black] (G) circle (2pt) node[right,font=\small] {$v_6$};
          \fill[black] (H) circle (2pt) node[left,font=\small] {$v_2$};

          \node at (5,1.5) {$\Longrightarrow$};

        \begin{scope}[shift={(7,1.5)},scale=2]            

  % 頂点の定義（90度回転: (x,y) → (y,-x)）
  \coordinate (E) at (0,0);
  \coordinate (A) at (1,0);
  \coordinate (D) at (2,0);
  \coordinate (C) at (3,0);

  % 有向辺と重みの追加（左右が上下に変わることに注意）
  \draw[->] (E) to [bend right,thick] node[below] {3} (0.95,0);
  \draw[->] (A) to [bend right,thick] node[below] {2} (1.95,0);
  \draw[->] (D) to [bend right,thick] node[below] {1} (2.95,0);
  \draw[->] (A) to [bend right,thick] node[above] {1} (0.05,0);
  \draw[->] (D) to [bend right,thick] node[above] {2} (1.05,0);
  \draw[->] (C) to [bend right,thick] node[above] {3} (2.05,0);

  % 頂点の描画（左→下に変化）
  \fill (D) circle (1pt) node[below,font=\small] {$V_2$};
  \fill (E) circle (1pt) node[below,font=\small] {$V_0$};
  \fill (C) circle (1pt) node[below,font=\small] {$V_3$};
  \fill (A) circle (1pt) node[below,font=\small] {$V_1$};

        \end{scope}

      \end{tikzpicture}      
      \caption{Graph $G$ and \revC{}{a} quotient graph.}
      \label{fig:cube}
    \end{figure}
\end{ex}

Let $G$ be a graph with an equitable partition $P$ and quotient matrix $B$.  
We denote by $G_B$ the quotient graph of $P$.  
We define the simple random walk on $G_B$ as the stochastic process that moves randomly from vertex to vertex along edges, where the transition probability is given by $T(B)$.  
We also denote by \revA{$H(G_B; V_i, V_j)$}{$H(G_B; (V_i, V_j))$} the average hitting time from $V_j$ to $V_i$ in the simple random walk on $G_B$.
To facilitate the calculation of the average hitting times in certain graphs, Nishimura introduced a class of graphs called $f$-equitable graphs \cite{N2023}.

  \begin{df}[$f$-equitable graph \cite{N2023}]
    Let $G$ be a connected graph with the vertex set $V$, and let $f:V\times V\mapsto \{c,x_1,\ldots,x_r\}$ be a function.
    Define $F_{x_i}(o)=\{v\in V\mid f(o,v)=x_i\}$ as a subset of $V$, and let 
    \[
      P_o=\{F_{c}(o),F_{x_1}(o),\ldots,F_{x_r}(o)\}
    \] 
    be the partition of $V$, where $o$ is any vertex in $V$.
    $G$ is called an \textit{$f$-equitable graph} if, for all vertices $o$ in the graph $G$, $P_o$ satisfies the following three conditions:
   \begin{itemize}
    \item $F_{c}(o)=\{o\}$;
    \item $P_o$ is equitable;
    \item There exists a graph $G_B$ that is isomorphic to the quotient graph of $P_o$ for every $o$.
   \end{itemize}
  \end{df}

By definition, an $f$-equitable graph has a unique quotient graph.  
We refer to this unique graph as the quotient graph of the $f$-equitable graph. 

\begin{ex}
For example, let $G$ be the graph shown on the left of Figure~\ref{fig:cube}, with vertex set $V=\{v_0,\ldots,v_7\}$.  
Define the function $d_G: V \times V \to \{0,1,2,3\}$ by letting $d_G(u,v)$ denote the distance between vertices $u$ and $v$ in $G$. Then $G$ is a $d_G$-equitable graph.
\end{ex}

It is known that the average hitting times on an $f$-equitable graph coincide with those on its quotient graph\revC{:}{.}

  \begin{thm}[\cite{N2023}]\label{thm:equi_old}
  Let $G$ be an $f$-equitable graph with the quotient graph $G_B$.

     Then, for any vertices $u$ and $v$ in $G$,
    \[
    H(G;(v,u))=H(G_B;(c,v_u)),
    \]
    where $v_u=f(v,u)$.
  \end{thm}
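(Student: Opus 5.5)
The plan is to lump the random walk on $G$ along the equitable partition centred at the target vertex $v$. Fix $u\neq v$ and take the partition $P_v=\{F_c(v),F_{x_1}(v),\ldots,F_{x_r}(v)\}$ with $F_c(v)=\{v\}$. Then $u$ lies in the class $F_{v_u}(v)$, where $v_u=f(v,u)$. Since $P_v$ is equitable with quotient matrix $B$, a vertex $w\in F_{x_j}(v)$ has exactly $b_{ij}$ neighbours in $F_{x_i}(v)$. Its degree is therefore $\deg(w)=\sum_i b_{ij}$, which depends only on $j$. So the probability that a simple random walk at $w$ moves into class $F_{x_i}(v)$ is $b_{ij}/\sum_l b_{lj}=T(B)_{ij}$, independent of the particular $w$ chosen in $F_{x_j}(v)$.

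From this one-step statement the key step is a lumpability argument, which I will write in matrix form to match \eqref{eq:ht}. Let $S$ be the $n\times(r+1)$ characteristic matrix of $P_v$. The observation above says $S^{\top}T(A)=T(B)S^{\top}$. Deleting the row and column for $v$, and equivalently the class $F_c(v)$, gives $S'^{\top}T(A)_{-v}=T(B)_{-c}S'^{\top}$, where $S'$ is $S$ with the row of $v$ and the column of $c$ removed. This is legitimate because the only vertex of class $c$ is $v$ itself, so the block structure is preserved. Subtracting $I$ yields $S'^{\top}(T(A)_{-v}-I)=(T(B)_{-c}-I)S'^{\top}$. Multiplying on the left by $(T(B)_{-c}-I)^{-1}$ and on the right by $(T(A)_{-v}-I)^{-1}$ gives
\[
(T(B)_{-c}-I)^{-1}S'^{\top}=S'^{\top}(T(A)_{-v}-I)^{-1}.
\]
Now multiply on the left by $-\allone^{\top}$ and use $\allone^{\top}S'^{\top}=\allone^{\top}$, since each vertex lies in exactly one class. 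This gives $\bold{H}(T(B)_{-c})S'^{\top}=\bold{H}(T(A)_{-v})$. Reading off the $u$-th entry produces $H(G;(v,u))=\bold{H}(T(B)_{-c})_{v_u}$. By \eqref{eq:ht} applied to the random walk on $G_B$, the right-hand side is $H(G_B;(c,v_u))$.

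The last step handles the dependence on $v$. The quotient graph of $P_v$ is isomorphic to the fixed graph $G_B$ for every $v$, and the isomorphism respects the labels $c,x_1,\ldots,x_r$, so the value obtained does not depend on the choice of target. The case $u=v$ is trivial, since then $v_u=c$ and both sides are $0$.

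The main obstacle I expect is justifying that the inverses exist. I will argue that $T(A)_{-v}$ is substochastic and that, because $G$ is connected, every vertex can reach $v$; hence its spectral radius is less than $1$ and $T(A)_{-v}-I$ is invertible. The same reasoning applies to $T(B)_{-c}$, since $G_B$ inherits connectivity from $G$. A second point needs care: the identity must hold for the orientation $T(A)_{ij}$, meaning the transition probability from $j$ to $i$ (column-stochastic), and the row-vector convention in $\bold{H}$. The equation $S^{\top}T(A)=T(B)S^{\top}$ is exactly the column-stochastic form of lumpability, so I will check it entrywise: $(S^{\top}T(A))_{i,w}=\sum_{y\in F_{x_i}(v)}a_{yw}/\deg(w)$.
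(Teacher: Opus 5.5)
Your proof is correct. The intertwining $S^{\top}T(A)=T(B)S^{\top}$ checks out entrywise in the paper's column-stochastic convention. The deletion step uses $F_c(v)=\{v\}$ in exactly the two places it is needed: no row of $v$ feeds a class $i\neq c$, and no $w\neq v$ lies in class $c$. The identity $\allone^{\top}S'^{\top}=\allone^{\top}$ then yields $\bold{H}(T(B)_{-c})S'^{\top}=\bold{H}(T(A)_{-v})$.

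In substance you follow the route the paper attributes to the original proof in \cite{N2023}: lump the walk along the stabilized equitable partition centred at the target, i.e.\ apply Theorem~\ref{thm:stabHt} to $P_v$. The difference is that you prove this lumping step yourself, in matrix form. The proof the paper actually writes out goes another way:
\begin{itemize}
\item it regards $G$ as weakly weight-$f$-equitable with a single colour;
\item it invokes Theorem~\ref{thm:equiW}, hence the maximal-entropy lumping of Theorem~\ref{thm:stabHtW} on the weight-quotient graph;
\item it then observes that an $f$-equitable graph is regular, so the Perron vector is constant, $B^*=B$ and $H_m=H$.
\end{itemize}
The paper's route is designed to show that the weighted framework subsumes the older result. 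It pays for this with the extra regularity observation and depends on Theorem~\ref{thm:stabHtW}, whose proof itself defers to the argument behind Theorem~\ref{thm:stabHt}.

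Your route is more elementary and self-contained. It never uses regularity, so it in fact proves Theorem~\ref{thm:stabHt} for an arbitrary stabilized equitable partition. It also makes explicit the invertibility of $T(A)_{-v}-I$ and $T(B)_{-c}-I$ via the substochastic/reachability argument, which the paper takes for granted.

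Two small remarks.
\begin{itemize}
\item You assert that the isomorphism between the quotient of $P_v$ and $G_B$ respects the labels $c,x_1,\ldots,x_r$. This is the reading of the definition that the statement itself presupposes, since otherwise $H(G_B;(c,v_u))$ would not be well defined. So it is legitimately part of the hypothesis.
\item If some $F_{x_i}(v)$ is empty, you should drop that column of $S$ (and the corresponding index of $B$), in line with the paper's convention on empty blocks. Otherwise $T(B)$ would have an undefined column.
\end{itemize}
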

From Theorem~\ref{thm:equi_old}, in some cases, the average hitting times can be computed without full information about the adjacency matrix.
For example, in a distance-regular graph, the average hitting times can be determined solely from the intersection array~\revC{\cite{B1993,DS1990,N2023,S1984}}{by Van Slijpe~\cite{S1984}, Devroye and Sbihi~\cite{DS1990}, Biggs~\cite{B1993}, and Nishimura~\cite{N2023}}.  
Other examples are discussed \revC{in~\cite{N2023}}{by Nishimura~\cite{N2023}}.
Theorem \ref{thm:equi_old} is proved using the properties of a specific equitable partition, which we detail below.

\begin{df}[Stabilized equitable partition]\label{def:stabilizedequitablepartition}
  Let $G$ be a graph and let $P=\{V_0,V_1,\ldots,V_r\}$ be its equitable partition. Then
  $P$ is called a \emph{stabilized equitable partition} centered at vertex $o$ if $V_0=\{o\}$.
\end{df}

\begin{thm}[\cite{N2023}]\label{thm:stabHt}
  Let $G$ be a graph and $P=\{V_0=\{o\},V_1,\ldots,V_r\}$ denote its stabilized equitable partition centered on vertex $o$, with quotient graph $G_B$.
  For any vertex $v$ in $V_i$,
  \[
    H(G;(o,v))  = H(G_B;(V_0,V_i)).
  \]
\end{thm}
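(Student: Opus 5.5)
We prove the statement by comparing two linear systems: the one defining hitting times to $o$ in $G$, and the one defining hitting times to $V_0$ in $G_B$. For each $v\ne o$, first-step analysis gives
\[
h_v = 1 + \sum_{w\in G(v)} \frac{1}{\deg(v)}\, h_w, \qquad h_o=0 ,
\]
where $h_v = H(G;(o,v))$. Equivalently, $\mathbf{h}=\bigl(\bold{H}(T(A)_{-o})\bigr)^{\top}$ as in \eqref{eq:ht}. This system has a unique solution, because $T(A)_{-o}$ is a strictly substochastic irreducible block of the transition matrix of a connected graph. Hence $I-T(A)_{-o}$ is invertible, and its inverse is the nonnegative series $\sum_k (T(A)_{-o})^k$.

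Likewise, in $G_B$ the hitting times $g_i=H(G_B;(V_0,V_i))$ are the unique solution of
\[
g_i = 1+\sum_{j} \frac{b_{ji}}{\sum_l b_{li}}\, g_j \quad (i\neq 0), \qquad g_0=0 .
\]
The key combinatorial input is that, for $v\in V_i$, equitability gives $\sum_l b_{li}=\deg(v)$ and $|G(v)\cap V_j|=b_{ji}$.

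Next we lift the solution of $G_B$ to $G$ by setting $\tilde h_v = g_i$ for $v\in V_i$. Since $V_0=\{o\}$, we get $\tilde h_o=g_0=0$. For $v\in V_i$ with $i\ne 0$, we group the neighbours of $v$ by part:
\[
1+\frac{1}{\deg v}\sum_{w\in G(v)}\tilde h_w = 1+\sum_j \frac{|G(v)\cap V_j|}{\deg v}\, g_j = 1+\sum_j \frac{b_{ji}}{\sum_l b_{li}}\, g_j = g_i=\tilde h_v .
\]
So $\tilde{\mathbf h}$ solves the system for $G$. By uniqueness, $\tilde{\mathbf h}=\mathbf h$, which is the claim $H(G;(o,v))=H(G_B;(V_0,V_i))$.

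The main obstacle is justifying uniqueness and invertibility on the quotient side: $G_B$ is a weighted directed graph with loops, so we must check that $I-T(B)_{-0}$ is invertible. We would handle this by noting that $G_B$ is strongly connected. If some part $V_i$ could reach $o$ in $G$, then there is a path in $G$ from a vertex of $V_i$ to $o$; projecting it to parts gives a directed path in $G_B$, since $b_{ji}>0$ whenever some (hence every) vertex of $V_i$ has a neighbour in $V_j$. Since $G$ is connected, every part reaches $V_0$. Therefore $T(B)_{-0}$ is substochastic with every state eventually leaking to $0$, its spectral radius is less than $1$, and the inverse exists. Alternatively, we can sidestep the issue: the vector $(g_i)$ defined probabilistically is finite and satisfies the equations, so the lifted vector satisfies $G$'s uniquely solvable system.
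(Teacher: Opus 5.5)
Your proof is correct, but it argues analytically where the paper argues probabilistically.

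\textbf{The paper's route.} The paper does not reprove this theorem; it is quoted from Nishimura's work. However, the proof of Theorem~\ref{thm:stabHtW}, which the paper says follows the same discussion, shows the intended argument. Every vertex of $V_j$ has exactly $b_{ij}$ neighbours in $V_i$ and degree $\sum_l b_{lj}$, so the probability of stepping into $V_i$ depends only on the block currently occupied. Hence the walk projected onto the blocks of $P$ is itself a Markov chain with transition matrix $T(B)$. Because $V_0=\{o\}$, its first visit to $V_0$ is exactly the walk's first visit to $o$.

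\textbf{Your route.} You use the same combinatorial fact, but you lift the quotient solution instead of projecting the process. You set $\tilde h_v=g_i$ for $v\in V_i$, check that it satisfies the first-step equations of $G$, and conclude by uniqueness. In matrix form this is the intertwining $S^{\top}T(A)=T(B)S^{\top}$ for the characteristic matrix $S$ of $P$. Deleting $o$ and $V_0$ then gives $\mathbf{H}(T(A)_{-o})=\mathbf{H}(T(B)_{-0})S_0^{\top}$, stated directly in the language of (\ref{eq:ht}).

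\textbf{Comparison.} The lumping argument gives more: the whole distribution of the hitting time coincides, not just its mean. It also needs no invertibility discussion. It does tacitly use that the block transition probabilities are independent of the past (strong lumpability), which the paper leaves implicit. Your route avoids reasoning about the projected process at all. The price is that you must justify uniqueness for $G$ and finiteness of the quotient hitting times; your reachability argument in $G_B$ supplies the latter.

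\textbf{A small correction.} $T(A)_{-o}$ need not be irreducible, since $o$ can be a cut vertex. For the star centred at $o$, $T(A)_{-o}=0$. Justify invertibility of $I-T(A)_{-o}$ exactly as you do for the quotient. Every component of $G-o$ contains a neighbour of $o$ (the penultimate vertex of a shortest path to $o$). That neighbour's column of $T(A)_{-o}$ sums to less than $1$, so the spectral radius is below $1$. Also, the sentence beginning ``If some part $V_i$ could reach $o$'' should say that every part reaches $o$, which is what your argument actually shows.
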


In order to extend the above tools to a more general context\revC{}{,} we will use the \revC{socalled}{so-called} weight-equitable partitions, which are \revC{}{a} natural generalization of equitable partitions.  Such weight partitions use the entries of the Perron eigenvector of the adjacency matrix to assign weights to the vertices. Doing so, we ``regularize'' the graph, in the sense that the weight-degree of each vertex becomes a constant. The theory of weight partitions of a graph has been shown to be a very powerful method to extend several results of spectral nature to general graphs, 
\revC{see e.g.~\cite{A2019,AZ2024,F1999,FG1999,H1995,LW2012}.}
{see, e.g., Abiad~\cite{A2019} or Fiol~\cite{F1999}.}

The adjacency eigenvalues of $A$ are denoted by \revC{$\lambda_1\geq \cdots \geq \lambda_n$}{$\lambda_1,\ldots,\lambda_n$ with $\lambda_1\geq \cdots \geq \lambda_n$} and $\revC{\nu}{\boldsymbol{\nu}}=(\nu_{1},\ldots,\nu_{n})^{\top}$ is \revC{}{the} Perron eigenvector of $A$. In a~$k$-regular graph, the rows of its adjacency matrix sum up to its largest eigenvalue,~$\lambda_1=k$. If~$G$ is not regular, we can ``regularize'' it by assigning \revC{the}{a} weight to each vertex~$u\in V$. 
This ensures that the \emph{weight-degree} $\delta_u^{\ast}$ of each vertex $u\in V$ equals the largest eigenvalue, i.e.,
        \[
          \delta^{*}_{u}
          \define
          \frac{1}{\nu_{u}}\sum_{v\in G (u)}\nu_{v}
          =
          \lambda_{1}.
        \]	 
If $\revA{\mathcal{P}}{P}$ is a partition of the vertex set $V=V_{1}\cup \cdots \cup V_{m}$, the \emph{weight-intersection number}
        of $u\in V_{j}$, \revC{}{for} $j \in \revC{[m]}{\{1,\ldots,m\}}$, is
        \begin{align*}
          b^{*}_{ij}(u)
          &\define
          \frac{1}{\nu_{u}}\sum_{v \in G(u)\cap V_{i}}\nu_{v},
          && i,j \in \revC{[m]}{\{1,\ldots,m\}}.
        \end{align*}
\begin{df}[Weight-equitable]
Let $G$ be a graph and let $\revA{\mathcal{P}}{P}$ be a partition of $G$.
$\revA{\mathcal{P}}{P}$ is \emph{weight-equitable} (or \emph{weight-regular}) if~$b^*_{ij}(u) = b^*_{ij}(v)$ for all~$i,j \in \revC{[m]}{\{1,\ldots,m\}}$ and~$u,v \in V_i$.
In other words, the weight-intersection numbers are independent of the choice of~$u \in V_j$.
\end{df}

\begin{df}[Weight-regular quotient matrix, weight-quotient graph]
  Let $G$ be a graph and let $\revA{\mathcal{P}}{P}$ be a weight-equitable partition of $G$.
  We write~$b^*_{ij}$ instead of~$b^{*}_{ij}(u)$, $u \in V_j$, and we call $B^* = (b_{ij}^*)$ the \emph{weight-regular quotient matrix} (or \emph{weight-equitable quotient matrix}) with respect to $\revA{\mathcal{P}}{P}$ (note that while in \revC{\cite{A2019}}{Abiad~\cite{A2019}}, $B^*$ is called the weight-regular quotient matrix,  in \revC{\cite{F1999}}{Fiol~\cite{F1999}} the same matrix is called \emph{pseudo-quotient matrix}).
We also define the \emph{weight-quotient graph} of a weight-equitable partition as the directed graph whose adjacency matrix is $B^*$.
\end{df}

\begin{rem}
Let $G$ be a graph and let $P$ be a (weight-)equitable partition of $G$.  
In this paper, we sometimes allow $P$ to include the empty set.  
However, we define the vertex set of the (weight-)quotient graph of $P$ to exclude any empty sets.

\begin{ex}
For example, let $G$ be the graph shown on the left of Figure~\ref{fig:cube}, and define $d_G$ as the distance between two vertices in $G$, and
\[
  D_i(v_0) \coloneqq \{ u \in V \mid d_G(v_0,u) = i \}.
\]  
Then, although $D_4(v_0) = \emptyset$, we still consider 
\[
  P = \{ D_0(v_0), \ldots, D_3(v_0), D_4(v_0) \}
\] 
to be a partition of $G$.  
Even in this case, we call $P$ a (weight-)equitable partition of $G$.  
However, the quotient graph of $P$ is still the graph shown on the right of Figure~\ref{fig:cube}.
\end{ex}

From the above definition, it follows that when $G$ is connected, the (weight-)quotient graph of $P$ is always connected.
\end{rem}

The following known result, which we will use later on, establishes the relationship between the entries of the quotient matrix $Q$ and \revC{}{the} weight-regular quotient matrix $B^\ast$.

  \begin{thm}[\cite{A2019}]\label{thm:eqRw}
  A partition $P = \{V_1, V_2, ..., V_m\}$ of a graph $G$ is equitable if and only if
    it is weight-equitable and the map on $V$, denoted $\rho : u\longrightarrow\nu_u$, is constant over each \revC{$V_i (1\leq i\leq m)$}{$V_i \,(\text{for } 1\leq i \leq m)$}. 
    Then, it holds that the quotient matrix entries of the equitable partition $b_{ij}$
    and the weight-regular quotient matrix of the weight-equitable partition $b^*_{ij}$ satisfy
  \[
    b^*_{ij}=\frac{\nu_j}{\nu_i}b_{ij}.
  \]
  \end{thm}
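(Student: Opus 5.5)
The plan is to prove the two implications separately and read off the relation between $b_{ij}$ and $b^*_{ij}$ from a direct computation. Throughout, let $S$ be the $n\times m$ characteristic matrix of $P$, so the $k$th column of $S$ is the indicator vector of $V_k$.

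\textbf{Equitable $\Rightarrow$ weight-equitable with $\rho$ constant on cells.} The substantive step is showing that the Perron eigenvector is constant on each cell. Equitability says that for $u\in V_j$ the number $|G(u)\cap V_i|=b_{ij}$ depends only on $i,j$. In matrix form this reads $AS=SB^{\top}$: the $(u,i)$ entry of $AS$ is the number of neighbours of $u$ in $V_i$, and for $u\in V_j$ this is $b_{ij}=(SB^{\top})_{u i}$.

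Since $G$ is connected, the quotient graph is connected, so $B^{\top}$ is a nonnegative irreducible matrix. By Perron--Frobenius it therefore has a positive eigenvector $\mathbf{y}$ with eigenvalue $\mu>0$. Then
\[
A(S\mathbf{y})=SB^{\top}\mathbf{y}=\mu\, S\mathbf{y},
\]
and $S\mathbf{y}$ is entrywise positive because the cells are nonempty. The paper recalls that $\lambda_1$ is the only eigenvalue of $A$ admitting a positive eigenvector, and that it is simple. Hence $\mu=\lambda_1$ and $S\mathbf{y}$ is a positive multiple of $\boldsymbol{\nu}$. As $S\mathbf{y}$ is constant on each $V_k$, so is $\boldsymbol{\nu}$; write $\nu_k$ for its common value on $V_k$.

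Weight-equitability is then immediate. For $u\in V_j$,
\[
b^*_{ij}(u)=\frac{1}{\nu_u}\sum_{v\in G(u)\cap V_i}\nu_v=\frac{\nu_i}{\nu_j}\,|G(u)\cap V_i|=\frac{\nu_i}{\nu_j}\,b_{ij},
\]
which does not depend on the choice of $u\in V_j$. This single identity gives both weight-equitability and the claimed proportionality between $b^*_{ij}$ and $b_{ij}$.

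\textbf{Converse.} Assume $P$ is weight-equitable and $\rho$ is constant on each cell. For $u\in V_j$, reverse the same computation:
\[
|G(u)\cap V_i|=\frac{\nu_j}{\nu_i}\,b^*_{ij}.
\]
The right-hand side is independent of $u\in V_j$, so $P$ is equitable, and the same relation between the two quotient matrices holds.

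\textbf{Remaining issue.} The only nontrivial step is the Perron--Frobenius argument forcing $\boldsymbol{\nu}$ to be constant on cells; the rest is bookkeeping. The one point needing care is the indexing convention. With the column convention adopted here ($b_{ij}$ counts neighbours in $V_i$ of a vertex in $V_j$), the computation gives the factor $\nu_i/\nu_j$. The stated form $b^*_{ij}=\frac{\nu_j}{\nu_i}b_{ij}$ corresponds to the transposed (row) convention used by Abiad~\cite{A2019}, in which the first index labels the cell of the base vertex. I would make this explicit when writing the identity.
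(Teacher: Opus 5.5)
The paper gives no proof of this statement; it is quoted from Abiad~\cite{A2019}. Your argument is correct and complete, and it is the standard one. The identity $AS=SB^{\top}$ lifts a Perron vector of the quotient to a positive eigenvector of $A$. Uniqueness of positive eigenvectors for the irreducible $A$ then forces $\boldsymbol{\nu}$ to be constant on cells. After that, both directions reduce to the one-line computation of $b^*_{ij}(u)$ for $u\in V_j$.

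One justification needs tightening. Connectivity of the quotient digraph in the weak sense does not by itself make $B^{\top}$ irreducible. What you need is that $b_{ij}>0$ if and only if $b_{ji}>0$. This holds for equitable partitions: a single edge between $V_i$ and $V_j$ forces every vertex of either cell to have a neighbour in the other. Hence connectivity of $G$ gives strong connectivity of the quotient.

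Alternatively, you can avoid irreducibility of $B^{\top}$ altogether. Any nonnegative nonzero eigenvector $\mathbf{y}$ of $B^{\top}$ (which Perron--Frobenius provides for every nonnegative matrix) lifts to a nonnegative nonzero eigenvector $S\mathbf{y}$ of $A$. For irreducible $A$, such a vector must be a positive multiple of $\boldsymbol{\nu}$. With this route, $\mu>0$ is never needed. Empty cells, which the paper sometimes allows, should simply be discarded first.

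Your remark on indexing is right and worth stating explicitly. Under the paper's own conventions, where both $b_{ij}$ and $b^*_{ij}(u)$ are taken at $u\in V_j$, the factor is $\nu_i/\nu_j$. This is what the paper actually uses later, when it writes $B^*(c)=D_\nu B(c)D_\nu^{-1}$. The cone-graph computation confirms it: there $b^*_{01}=\nu_a/\nu_v=\lambda_1-k$, whereas the displayed formula would give $1/(\lambda_1-k)$. So the displayed $\nu_j/\nu_i$ belongs to the transposed convention and was carried over without adjustment.
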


 Note that an equitable partition is weight-equitable, but the converse is not always true. 
 For more background on weight-equitable partitions, \revC{see~\cite{A2019}.}{see Abiad~\cite{A2019}.}

%%%%%%%%%%%%%%%%%%%%%%%%%%%%%%%%%%%%%%%%%%%%%%%%%%%%%%%%%%%%%%%%%%%%%%%%%%%%%%%%%%%%%%%%%%
\section{The algebraic combinatorial framework}\label{sec:weightequitable}
%%%%%%%%%%%%%%%%%%%%%%%%%%%%%%%%%%%%%%%%%%%%%%%%%%%%%%%%%%%%%%%%%%%%%%%%%%%%%
In this section, we propose a general algebraic framework that we will later use to obtain the average hitting times of several graph classes holding several regularity properties.

Instances of such highly regular graph classes are pseudo-distance-regular graphs, which were introduced by 
Fiol \cite{F2001,F2012} to define \revC{distance regularity}{distance-regularity} even for non-regular graphs. Since pseudo-distance-regular graphs are not $f$-equitable graphs, the average hitting time for these graph classes does not follow from the existing frameworks to derive average hitting times for distance-regular graphs \revC{\cite{B1993,N2023}}{such as Biggs~\cite{B1993} and Nishimura~\cite{N2023}}. 
To overcome this difficulty, in this section we focus on deriving a new theoretical framework for the \revC{socalled}{so-called} weakly weight-$f$-equitable graphs, and we show that the average hitting time of \revC{these}{this} graph class can be determined by their weight-intersection numbers (Theorem~\ref{thm:equiW}). 
Note that an $f$-equitable graph is always a weakly weight-$f$-equitable graph, but the converse does not hold.  
Furthermore, we define two specific types of weakly weight-$f$-equitable graphs as \emph{weakly $f$-equitable} and \emph{weight-$f$-equitable}.  
We can calculate the average hitting times in the former graph using two types of random walks: the simple random walk and the maximal entropy random walk (Theorem~\ref{thm:equi}). 
This allows us to clearly observe the difference between these two walks.  
We also show that a graph $G$ is weight-$f$-equitable if and only if $G$ is $f$-equitable, which implies that the latter generalization is not meaningful.
To show this result\revC{}{,} we use \revC{of}{} the theory of weight-equitable partitions of graphs, for more details\revC{}{,} we refer the reader to \revC{\cite{F1999, A2019}}{Fiol~\cite{F1999} and Abiad~\cite{A2019}}. 
We should note that, unlike previous work on the average hitting time of specific graphs which \revC{use}{used} the properties of graph automorphisms (\revC{see e.g. \cite{R2013, T2024}}{Rao~\cite{R2013} and Tanaka~\cite{T2024}}), our approach only requires \revC{to use}{using} the weight-intersection numbers.  

This section is structured as follows. In Section \ref{subsec:wpartitions}\revC{}{,} we show that if a graph $G$ has a stabilized weight-equitable partition $P$ centered at vertex $o$, then the average hitting times of the maximal entropy random walk from vertex $v$ to $o$ can be calculated using the weight-regular quotient matrix of $P$ (Theorem \ref{thm:stabHtW}). 
In Section \ref{subsec:weaklyfequitable}\revC{}{,} we define weakly weight-$f$-equitable graphs, to which we can apply Theorem \ref{thm:stabHtW} for any vertex. Additionally, we consider certain restrictions of weakly weight-$f$-equitable graphs, which we call \emph{weakly $f$-equitable graphs} and \emph{weight-$f$-equitable graphs}. 
In the former case, the average hitting times of two random walks on a weakly $f$-equitable graph can be computed via its quotient graph (Theorem~\ref{thm:equi}). 

In the latter case, we show that every weight-$f$-equitable graph is, in fact, an $f$-equitable graph (Theorem~\ref{thm:equiWisR}). 
Hence, this case has already been considered in Nishimura~\cite{N2023}.

%%%%%%%%%%%%%%%%%%%%%%%%%%%%%%%%%%%%%%%%%%%%%%%%%%%%%%%%%%%%%%%%%%%%%%%%%%%%%%%%%%%%%%%%%%
\subsection{Weight-equitable partitions and maximal entropy random walk}\label{subsec:wpartitions}
%%%%%%%%%%%%%%%%%%%%%%%%%%%%%%%%%%%%%%%%%%%%%%%%%%%%%%%%%%%%%%%%%%%%%%%%%%%%%%%%%%%%%%%%%%

Now we are ready to extend Theorem \ref{thm:stabHt} to maximal entropy random walks, see Theorem \ref{thm:stabHtW}. For this, we need first the following definition.

\begin{df}[Stabilized weight-equitable partition]\label{df:stabW}
  Let $G$ be a graph and let $P=\{V_0,V_1,\ldots,V_r\}$ be its weight-equitable partition.
  Then $P$ is called a \emph{stabilized weight-equitable partition} centered at vertex $o$ if $V_0=\{o\}$.
\end{df}

\begin{thm}\label{thm:stabHtW}
  Let $G$ be a graph, $P=\{V_0=\{o\},V_1,\ldots,V_r\}$ denote its stabilized weight-equitable partition centered on vertex $o$ with weight-quotient graph $G_{B^*}$.
  For any vertex $v$ in $V_i$,
  \[
    H_m(G;(o,v))  = H(G_{B^*};(V_0,V_i)).
  \]
\end{thm}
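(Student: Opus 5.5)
Let $h(v)=H_m(G;(o,v))$. I will show that $h$ is constant on each class $V_i$ and that the class values satisfy exactly the linear system defining hitting times to $V_0$ in the weight-quotient graph.

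\textbf{Step 1: the maximal entropy walk.} By the Remark, the transition matrix is $T(D_\nu A)=\frac{1}{\lambda_1}D_\nu A D_\nu^{-1}$. The probability of moving from $u$ to a neighbour $w$ is therefore $\frac{\nu_w}{\lambda_1\nu_u}$. The hitting times to $o$ are the unique solution of
\[
h(o)=0,\qquad h(u)=1+\sum_{w\in G(u)}\frac{\nu_w}{\lambda_1\nu_u}\,h(w)\quad (u\neq o).
\]
Uniqueness holds because $G$ is connected, so $T(D_\nu A)_{-o}$ has spectral radius less than $1$; this is the content of \eqref{eq:ht} with $T(D_\nu A)$ in place of $T(A)$.

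\textbf{Step 2: the walk on $G_{B^*}$.} Note that $\sum_i b^*_{ij}=\delta^*_u=\lambda_1$ for every $u\in V_j$. Hence the column sums of $B^*$ all equal $\lambda_1$, and $T(B^*)=B^*/\lambda_1$. The hitting times $\eta_i=H(G_{B^*};(V_0,V_i))$ are the unique solution of
\[
\eta_0=0,\qquad \eta_j=1+\sum_i \frac{b^*_{ij}}{\lambda_1}\eta_i\quad (j\neq 0).
\]
Uniqueness holds because the weight-quotient graph is connected (Remark). Empty classes are simply omitted, as the Remark allows.

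\textbf{Step 3: lift and compare.} Define the candidate $\tilde h(u)=\eta_j$ for $u\in V_j$. For $u\in V_j$ with $j\neq0$, group the neighbours of $u$ by class:
\[
1+\sum_{w\in G(u)}\frac{\nu_w}{\lambda_1\nu_u}\tilde h(w)=1+\frac{1}{\lambda_1}\sum_i \eta_i\cdot\frac{1}{\nu_u}\sum_{w\in G(u)\cap V_i}\nu_w=1+\frac{1}{\lambda_1}\sum_i b^*_{ij}(u)\,\eta_i .
\]
Weight-equitability gives $b^*_{ij}(u)=b^*_{ij}$, so the right-hand side equals $\eta_j=\tilde h(u)$. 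Also $\tilde h(o)=\eta_0=0$. So $\tilde h$ solves the system of Step 1, and by uniqueness $\tilde h=h$. This gives $H_m(G;(o,v))=\eta_i$ for $v\in V_i$, which is the claim.

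The main obstacle is justifying uniqueness in both systems and checking that the quotient walk is genuinely stochastic, which is the column-sum identity in Step 2. Both follow from irreducibility (connectedness) together with the weight-degree identity $\delta^*_u=\lambda_1$. If one prefers to work with the paper's $\bold H$ notation, the same argument can be phrased as $\bold H(T(B^*)_{-0})S^\top=\bold H(T(D_\nu A)_{-o})$, where $S$ is the class-indicator matrix with row $0$ removed. This identity follows from the intertwining relation $T(D_\nu A)_{-o}^{\top}S^\top=S^\top T(B^*)_{-0}^{\top}$.
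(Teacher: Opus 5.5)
Your proof is correct, and it differs from the paper's in how the block-level computation is turned into a statement about hitting times.

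Both arguments rest on the same identity. For $u\in V_j$, the probability that the maximal entropy walk steps into $V_i$ is $\frac{1}{\lambda_1\nu_u}\sum_{w\in G(u)\cap V_i}\nu_w=b^*_{ij}/\lambda_1$, independent of $u$.

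The paper reads this probabilistically. The projected process (the block containing $X_t$) is itself a Markov chain with the quotient transition probabilities, a lumpability argument. It then appeals to the discussion behind Theorem~\ref{thm:stabHt} and uses $V_0=\{o\}$.

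You instead lift the quotient hitting-time vector to a class-constant function on $V(G)$. You check that it satisfies the first-step system of the walk on $G$, and conclude by uniqueness of that system.

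What your route buys:
\begin{itemize}
\item It is self-contained; the paper defers to an argument (Nishimura's) that it does not reproduce.
\item It makes explicit that every column of $B^*$ sums to $\lambda_1$, so $T(B^*)=\frac{1}{\lambda_1}B^*$ really is the simple walk on $G_{B^*}$.
\item It shows the quotient hitting times are finite because $V_0$ is reachable from every class.
\item It sidesteps the loose conditioning on the event $X_{t-1}\in V_j$ in the paper's write-up. What is actually needed, and what the computation shows, is conditioning on $X_{t-1}=u$ for each $u\in V_j$.
\end{itemize}

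What the paper's route buys: the whole block process is Markov with the quotient chain's law. Hence the hitting time of $o$ from $v$ and the hitting time of $V_0$ from $V_i$ agree in distribution, not only in expectation.

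One small slip in your closing remark. If $S$ has rows indexed by the classes $V_1,\dots,V_r$, as ``row $0$ removed'' suggests, the identity should read $\mathbf{H}(T(B^*)_{-0})\,S=\mathbf{H}(T(D_\nu A)_{-o})$, with no transpose on $S$. Your intertwining relation is consistent with that convention.
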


\begin{proof}
  Let $X_0 = x$ be the starting vertex of a maximal entropy random walk on $G$, and let $X_t$ be a random variable representing the vertex reached after $t$ steps in the random walk.
  Let us denote $Pr(X_t \in V_i | X_{t-1} \in V_j)$ as the probability of $X_t$ being in $V_i$ given that $X_{t-1}$ is in $V_j$.
  Then,
  \begin{align*}
    Pr(X_t \in V_i | X_{t-1} \in V_j)&=\frac{\sum_{v\in G(X_{t-1})\cap V_i}\nu_v}{\sum_{v\in G(X_{t-1})}\nu_v}\\
    &=\frac{\frac{1}{\nu_{X_{t-1}}}\sum_{v\in G(X_{t-1})\cap V_i}\nu_v}{\frac{1}{\nu_{X_{t-1}}}\sum_{v\in G(X_{t-1})}\nu_v}\\
    &=\frac{b_{ij}^*}{\lambda_1}.
  \end{align*}
  This implies that the transition probability matrix of the stochastic process, where $X_t$ is in the block of $P$, is $\frac{1}{\lambda_1}(B^*)^\top$.
  Hence, similar to the \revC{disucussion}{discussion} in the proof of Theorem \ref{thm:stabHt}, the expected value of the number of steps required for $X_t$ to be in $V_0$ for the first time when $X_0\in V_i$ is  $H(G_{B^*};(V_0,V_i))$. 
  Here, since $V_0=\{o\}$, $X_t$ is in $V_0$ if and only if $X_t=o$, which means
  \[
    H_m(G;(o,v))  = H(G_{B^*};(V_0,V_i)).\qedhere
  \]
\end{proof}

%%%%%%%%%%%%%%%%%%%%%%%%%%%%%%%%%%%%%%%%%%%%%%%%%%%%%%%%%%%%%%%%%%%%%%%%%%%%%%%%%%%%%%%%%%
\subsection{Weakly weight-$f$-equitable graphs}\label{subsec:weaklyfequitable}
%%%%%%%%%%%%%%%%%%%%%%%%%%%%%%%%%%%%%%%%%%%%%%%%%%%%%%%%%%%%%%%%%%%%%%%%%%%%%%%%%%%%%%%%%%
 
Here, we introduce weakly weight-$f$-equitable graphs, to which Theorem \ref{thm:stabHtW} can be applied \revC{for}{to} all vertices.

\begin{df}[Weakly weight-$f$-equitable graph]\label{df:wwfe}
  Let $G$ be a graph with vertex set $V$, and let $f:V\times V\rightarrow \{c_1,c_2,\ldots,c_s,x_1,\ldots,x_r\}$ be a function such that
  \[
    f(v,u)\in \begin{cases}
      \{c_1,\ldots,c_s\}&u=v\\
      \{x_1,\ldots,x_r\}&u\neq v
    \end{cases}.
  \]
  Define $F_{x_i}(o)=\{u\in V\mid f(o,u)=x_i\}$ as a subset of $V$, and let 
  \[
    P_o=\{F_{f(o,o)}(o),F_{x_1}(o),\ldots,F_{x_r}(o)\}
  \] 
  be the partition of $V$, where $o$ is any vertex in $V$.
 A graph $G$ is called a \textit{weakly weight-$f$-equitable graph} if, for all vertices $o$ in  $G$, $P_o$ satisfies the following three conditions:
 \begin{itemize}
  \item $F_{f(o,o)}(o)=\{o\}$;
  \item $P_o$ is weight-equitable;
  \item  The weight-quotient graph of $P_o$ only \revC{depend}{depends} on the value of $f(o,o)$.
 \end{itemize}
\end{df}

Let $G$ be a weakly weight-$f$-equitable graph, where 
$f: V \times V \rightarrow \{c_1, c_2, \ldots, c_s, x_1, \ldots, x_r\}$ is a function such that
\[
  f(u,v) \in 
  \begin{cases}
    \{c_1, \ldots, c_s\} & \text{if } u = v, \\
    \{x_1, \ldots, x_r\} & \text{if } u \neq v.
  \end{cases}
\]  
We call the set $\{c_1, \ldots, c_s\}$ the \emph{color set} of $f$, and the set $\{x_1, \ldots, x_r\}$ the \emph{relation set} of $f$.  
An element of the color set is referred to as a \emph{color}.  
Then, $G$ has $s$ types of weight-regular quotient matrices and weight-quotient graphs corresponding to the elements of the color set.  
We denote the weight-regular quotient matrix and the weight-quotient graph corresponding to a color $c$ by $B^*(c)$ and $G^*_c$, respectively.

In the next result we show that when $G$ is weakly weight-$f$-equitable, the average hitting times of the maximal entropy random walk on $G$ are equal to those of the simple random walk on the corresponding weight-quotient graphs.

\begin{thm}\label{thm:equiW}
 Let $G$ be a weakly weight-$f$-equitable graph with the weight-quotient graphs $\{G^*_c\}_{c\in C}$, where $C$ is a color set of $f$.
  Then, for any vertices $u$ and $v$ in $G$,
  \[
    H_m(G;(v,u))=H(G^*_{c_v}; (c_v,v_u)),
  \]
  where $c_v=f(v,v)$ and $v_u=f(v,u)$.
\end{thm}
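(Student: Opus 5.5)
The plan is to reduce the statement directly to Theorem~\ref{thm:stabHtW}, applied with the target vertex $v$ as the center. Fix $u,v\in V$. If $u=v$, both sides are zero: $H_m(G;(v,v))=0$ by convention, and $v_u=f(v,v)=c_v$, so the right-hand side is $H(G^*_{c_v};(c_v,c_v))=0$. So I will assume $u\neq v$, which gives $v_u=f(v,u)\in\{x_1,\ldots,x_r\}$.

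Next I take $o=v$ in Definition~\ref{df:wwfe}. The first two conditions say that
\[
  P_v=\{F_{c_v}(v)=\{v\},F_{x_1}(v),\ldots,F_{x_r}(v)\}
\]
is weight-equitable and contains the singleton block $\{v\}$. Hence $P_v$ is a stabilized weight-equitable partition centered at $v$ in the sense of Definition~\ref{df:stabW}. Some blocks may be empty; by the convention in the Remark, those are simply omitted from the weight-quotient graph. The vertex $u$ lies in the block $F_{v_u}(v)$, which is therefore nonempty. Theorem~\ref{thm:stabHtW} then gives
\[
  H_m(G;(v,u))=H\bigl(G_{B^*};(F_{c_v}(v),F_{v_u}(v))\bigr),
\]
where $G_{B^*}$ is the weight-quotient graph of $P_v$.

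It remains to identify $G_{B^*}$ with $G^*_{c_v}$, and this is the step that needs care. The third condition of Definition~\ref{df:wwfe} says that the weight-quotient graph of $P_o$ depends only on $f(o,o)$. I read this as saying that the weight-regular quotient matrix is the same matrix $B^*(c)$ for all $o$ with $f(o,o)=c$, with the $i$-th block indexed by the label $x_i$ (and the center block by $c$). Under this reading, the block $F_{c_v}(v)$ is the vertex labelled $c_v$ in $G^*_{c_v}$, and $F_{v_u}(v)$ is the vertex labelled $v_u$.

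I expect this labelling to be the main obstacle. If ``depends only on $f(o,o)$'' meant only isomorphism as abstract graphs, the labels $c_v$ and $v_u$ in the statement would not be well defined. So I will state explicitly that the identification respects the labels of $f$, as the notation $B^*(c)$ implicitly assumes, in the same way the analogous identification is used for Theorem~\ref{thm:equi_old}.

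With that identification in place, the simple random walk on $G_{B^*}$, with transition matrix $T(B^*)$, is exactly the simple random walk on $G^*_{c_v}$. The hitting times therefore agree, giving
\[
  H_m(G;(v,u))=H(G^*_{c_v};(c_v,v_u)).
\]
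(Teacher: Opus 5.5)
Your proposal is correct and follows the paper's own proof: fix the target vertex $v$, note that $P_v$ is a stabilized weight-equitable partition centered at $v$ whose weight-quotient graph is $G^*_{c_v}$, and apply Theorem~\ref{thm:stabHtW}. The paper leaves two points implicit that you make explicit: the trivial case $u=v$, and the requirement that the identification of the weight-quotient graph of $P_v$ with $G^*_{c_v}$ respect the labels $c_v$ and $v_u$ given by $f$. Neither changes the argument.
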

\begin{proof}
Let $R$ be the relation set of $f$.
  For any vertex $v$ in $G$, there exists $\{x_{v_1},\ldots,x_{v_r}\}\subset R$ such that 
  \[
    P_v=\{F_{c_v}(v),F_{x_{v_1}}(v),\ldots,F_{x_{v_r}}(v)\}
  \]
  be a partition where all of the blocks are non-empty sets.
  From the definition of weakly weight-$f$-equitable, $P_v$ is a stabilized weight-equitable partition centered on $v$, and its weight-quotient graph is $G_{c_v}^*$. 
  Therefore, from Theorem \ref{thm:stabHtW}, for any vertex $u\in F_{x_{v_i}}(v)$,
    \begin{align*}
      H_m(G,(v,u))=H(G^*_{c_v}; (c_v,v_u)).      \,\,\,
    \end{align*}\qedhere 
\end{proof}
  From Theorem \ref{thm:equiW}, we obtain an alternative proof of Theorem \ref{thm:equi_old}.
  \begin{proof}[Alternative proof of Theorem \ref{thm:equi_old}]
Let $G$ be any $f$-equitable graph.
We can also consider $G$ as a weakly weight-$f$-equitable graph, where the color set of $f$ is a singleton.  
Therefore, from Theorem~\ref{thm:equiW}, we obtain
\[
  H_m(G, (v,u)) = H(G_B^*, (c, v_u)),
\]
where $G_B^*$ is the weight-quotient graph of $f$.  
Since all partitions defined by $f$ are also equitable, we can also define the quotient graph of $f$, denoted by $G_B$.
Additionally, since $G$ is $f$-equitable, $G$ must be regular, which implies $G_B^*$ is isomorphic to $G_B$ and 
    \[
      H_m(G,(v,u))=H(G,(v,u)). 
    \]
    This implies
    \[
      H(G,(v,u))=H(G_{B},(c,v_u)). \qedhere
    \]
  \end{proof}

Recall that an $f$-equitable graph is a weakly weight-$f$-equitable graph that satisfies two conditions: one, the color set must be a singleton; and two, all partitions must be equitable.  
Next, we relax this definition and consider specific weakly weight-$f$-equitable graphs in which only one of these conditions holds.  
First, we consider the case of a weakly weight-$f$-equitable graph in which all partitions generated by $f$ are equitable.

  \begin{df}[Weakly $f$-equitable]
    Let $G$ be a weakly weight-$f$-equitable graph.
    Then $G$ is \emph{weakly $f$-equitable} if all partitions generated by $f$ are an equitable partition.
  \end{df}

  \begin{rem}\label{rem:1}
Weakly $f$-equitable graphs are not always $f$-equitable. An example of such graphs is given by distance-regularized graphs, which are discussed in Section~\ref{sec:distanceregularized}.  
Additionally, there also exist graphs that are weakly weight-$f$-equitable but not weakly $f$-equitable for a fixed $f$.
    Indeed, let $G$ be a graph with vertex set $V=\{v_1,\dots,v_n\}$ and a weight-equitable partition $P=\{V_0=\{v_1\},V_1,\dots,V_m\}$ that is not equitable; see an example in \revC{\cite{F1999}}{Fiol~\cite{F1999}} of such a graph $G$ and a weight-equitable partition $P$. Then, suppose $f:V\times V\rightarrow \{c_1,c_2,\ldots,c_n,x_1,\ldots,x_n\}$ is defined by  
\[
f(v_i,v_j)=\begin{cases}
  c_i & i=j,\\
  x_k & i=1, v_j\in V_k,\\
  x_j & i\neq j, i\neq 1.
\end{cases}
\]
  Then, $G$ is weakly weight-$f$-equitable but not weakly $f$-equitable.
\end{rem}

 We denote the quotient matrix and quotient graph corresponding to a color $c$ of $f$ by $B(c)$ and $G_c$, respectively.  
Since a weakly $f$-equitable graph is also weakly weight-$f$-equitable, there exists a corresponding weight-regular quotient matrix $B^*(c)$.  
From Theorem~\ref{thm:eqRw}, we have
\[
  B^*(c) = D_\nu B(c) D_\nu^{-1}.
\]
  Using the definitions and results above, we can prove the following result.

\begin{thm}\label{thm:equi}
  Let $G$ be a weakly $f$-equitable graph with quotient matrix $B(c)$ and quotient graph $G_c$, where $c$ is a color of $f$.

Define $B^*(c)=D_\nu B(c)D_\nu^{-1}$ and let $G_c^*$ be the graph whose adjacency matrix is $B^*(c)$. 

  Then, for any vertices $u$ and $v$ in $G$,
 \begin{align*}
    H(G;(v,u))&=H(G_{c_v};(c_v,v_u))\\
    H_m(G;(v,u))&=H(G_{c_v}^*;(c_v,v_u)),
  \end{align*} 
  where $f(v,v)=c_v$ and $f(v,u)=v_u$.
\end{thm}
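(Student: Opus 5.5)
The statement has two identities, and I would prove them separately, each by reducing to a stabilized-partition result already available.

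For the simple random walk identity, fix vertices $u$ and $v$. By the definition of a weakly weight-$f$-equitable graph, the partition $P_v=\{F_{c_v}(v),F_{x_1}(v),\ldots,F_{x_r}(v)\}$ has $F_{c_v}(v)=\{v\}$. Since $G$ is weakly $f$-equitable, $P_v$ is also equitable, so (after discarding empty blocks) it is a stabilized equitable partition centered at $v$ in the sense of Definition~\ref{def:stabilizedequitablepartition}.

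Theorem~\ref{thm:stabHt} then gives $H(G;(v,u))=H(G_B;(V_0,V_i))$, where $G_B$ is the quotient graph of $P_v$ and $u\in V_i=F_{v_u}(v)$. It remains to identify $G_B$ with $G_{c_v}$, meaning the quotient graph depends only on the color $c_v$. I expect this step to be the main obstacle, because the definition only guarantees that the \emph{weight}-quotient graph depends only on $f(v,v)$. I would argue as follows.
\begin{itemize}
\item By Theorem~\ref{thm:eqRw}, equitability forces $\nu$ to be constant on each block. Writing $\nu_j$ for this common value on block $j$, we have $b^*_{ij}=\frac{\nu_j}{\nu_i}b_{ij}$.
\item The paper itself defines $B(c)$ per color and asserts $B^*(c)=D_\nu B(c)D_\nu^{-1}$. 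So I would adopt the reading that the quotient matrix $B(c)$ is well defined per color, as the statement presupposes by naming "quotient matrix $B(c)$ and quotient graph $G_c$".
\item If that reading needed justification, note that both $B$ and $B^*$ have the same zero pattern and $b^*_{ij}b^*_{ji}=b_{ij}b_{ji}$. Together with the fact that the block sizes satisfy $|V_i|b_{ji}=|V_j|b_{ij}$, this should pin down $B$ from $B^*$ along a spanning tree of the connected quotient, starting from $|V_0|=1$. I would only sketch this.
\end{itemize}
With $G_B\cong G_{c_v}$ and $V_0,V_i$ labelled $c_v,v_u$, the first identity follows.

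For the maximal entropy identity, a weakly $f$-equitable graph is in particular weakly weight-$f$-equitable. Theorem~\ref{thm:equiW} therefore gives $H_m(G;(v,u))=H(G^*_{c_v};(c_v,v_u))$, where $G^*_{c_v}$ is the weight-quotient graph with matrix $B^*(c_v)$. By Theorem~\ref{thm:eqRw}, this matrix equals $D_\nu B(c_v)D_\nu^{-1}$, with $D_\nu$ the diagonal of the block values of $\nu$. That matches the $G^*_{c_v}$ defined in the statement, which completes the proof.
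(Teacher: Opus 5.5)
This is essentially the paper's proof: Theorem~\ref{thm:stabHt} applied to the stabilized equitable partition $P_v$ gives the first identity, and Theorem~\ref{thm:equiW} gives the second. For the first, the paper, like your main reading, simply takes the quotient graph of $P_v$ to be the per-color $G_{c_v}$ presupposed in the statement. For the second, $B^*(c)=D_\nu B(c)D_\nu^{-1}$ holds because, with the paper's column convention, the definitions give $b^*_{ij}=\frac{\nu_i}{\nu_j}b_{ij}$. The ratio $\frac{\nu_j}{\nu_i}$ you copied from Theorem~\ref{thm:eqRw} would instead give $D_\nu^{-1}BD_\nu$.

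Drop the spanning-tree aside, though. Your relations do fix the blocks adjacent to $V_0$, since there $b_{0j}=1$ forces $|V_j|=b^*_{0j}b^*_{j0}$. Further out they leave a free parameter. For example, with $|V_1|=2$, both $(b_{21},b_{12},|V_2|)=(1,2,1)$ and $(2,1,4)$ satisfy $b_{12}b_{21}=b^*_{12}b^*_{21}$ and $|V_1|b_{21}=|V_2|b_{12}$ with the same $b^*_{21},b^*_{12}$ (take $\nu$ on $V_2$ twice as large in the first case). So these relations do not pin down $B$ from $B^*$.
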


\begin{proof}
Let $R$ be the relation set of $f$. By the same argument as in the proof of Theorem~\ref{thm:equiW}, there exists a subset $\{x_{v_1}, \ldots, x_{v_r}\} \subset R$ such that
\[
  P_v = \{ F_{c}(v), F_{x_{v_1}}(v), \ldots, F_{x_{v_r}}(v) \}
\]
is a stabilized equitable partition centered at $v$, and its quotient graph is $G_{c_v}$. 
  Therefore, from Theorem \ref{thm:stabHt}, for any vertex $u\in F_{x_{v_i}}(v)$,
  \begin{align*}
    H(G,(v,u))=H(G_{c_v};(c_v,v_u)).
  \end{align*}
  From the definition of weakly-$f$-equitable, $G$ is also weakly weight-$f$-equitable and the weight-regular quotient matrix is $B^*(c)$.
  Therefore, we obtain the second equation by Theorem \ref{thm:equiW}.\qedhere
\end{proof}
\begin{rem}
  Note that any graph is weakly $f$-equitable. Let $G$ be a graph with vertex set $V=\{v_1,\ldots,v_n\}$, and define $f(v_i,v_j)=v_j$ if $i\neq j$ and $f(v_i,v_i)=i$.
  Then, the partition constructed by $f$ is the finest subdivision where each block contains only one vertex.
  Since such a partition is always equitable, $G$ and $f$ satisfy the condition of being weakly $f$-equitable.
  Hence, when computing the average hitting times of $G$, it is not particularly important to verify that $G$ is weakly $f$-equitable; rather, the crucial aspect is determining whether a ``good'' $f$ exists, meaning that $G$ is weakly $f$-equitable and both the number of quotient matrices and their size are small.
\end{rem}

    Next, we investigate the case when the weakly weight-$f$-equitable graph has a singleton color set.

\begin{df}[Weight-$f$-equitable graph]
  Let $G$ be a weakly weight-$f$-equitable graph.
  We call $G$ as weight-$f$-equitable if the color set of $f$ is a singleton.
\end{df}

From the definition, \revC{}{an} $f$-equitable graph is always \revC{}{a} weight-$f$-equitable graph.
Actually, the converse \revC{is}{} also holds.

\begin{thm}\label{thm:equiWisR}
  If $G$ is weight-$f$-equitable, then $G$ is $f$-equitable.
\end{thm}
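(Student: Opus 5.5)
The plan is to show that a weight-$f$-equitable graph $G$ is regular, so that its Perron eigenvector is constant. Once $\boldsymbol{\nu}=\frac{1}{\sqrt{n}}\allone$, the map $\rho:u\mapsto\nu_u$ is constant on every block of every $P_o$. Theorem~\ref{thm:eqRw} then shows that each weight-equitable partition $P_o$ is equitable, with $b_{ij}=\frac{\nu_i}{\nu_j}b^*_{ij}=b^*_{ij}$. So the quotient matrix of $P_o$ equals the common weight-regular quotient matrix $B^*(c)$, where $c$ is the unique color. Hence all quotient graphs are isomorphic, the first condition $F_c(o)=\{o\}$ holds by hypothesis, and $G$ is $f$-equitable.

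To prove regularity I would use the maximal entropy random walk and Kac's return-time formula. The walk has transition matrix $T(D_\nu A)=\frac{1}{\lambda_1}D_\nu A D_\nu^{-1}$, and its stationary distribution is $\pi_u=\nu_u^2$, since $\norm{\boldsymbol{\nu}}{}=1$. This is checked directly: $\frac{1}{\lambda_1}D_\nu A D_\nu^{-1}(D_\nu\boldsymbol{\nu})=\frac{1}{\lambda_1}D_\nu A\boldsymbol{\nu}=D_\nu\boldsymbol{\nu}$. Because $G$ is connected the chain is irreducible, so the expected return time to a vertex $o$ equals $1/\pi_o=1/\nu_o^2$.

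Next I would compute the same expected return time through the partition $P_o$. By the computation in the proof of Theorem~\ref{thm:stabHtW}, the block process of the walk is itself a Markov chain on the blocks of $P_o$, with transition matrix $\frac{1}{\lambda_1}(B^*)^\top$. This holds for every starting vertex, including $o$ itself. Since $V_0=\{o\}$, returning to $o$ is the same as the block chain returning to $V_0$. Hence the return time equals
\[
1+\sum_{i}\frac{b^*_{i0}}{\lambda_1}\,H(G^*_c;(V_0,V_i)),
\]
which is the expected return time to $V_0$ in the chain on $G^*_c$. Equivalently, one can write it as $1+\sum_{u\in G(o)}\frac{\nu_u}{\lambda_1\nu_o}H_m(G;(o,u))$ and apply Theorem~\ref{thm:equiW} termwise. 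In either form the quantity depends only on $B^*(c)$ and $\lambda_1$, and by assumption these are the same for every $o$. Therefore $1/\nu_o^2$ is independent of $o$, so $\boldsymbol{\nu}$ is constant. Since $A\boldsymbol{\nu}=\lambda_1\boldsymbol{\nu}$, every vertex then has degree $\lambda_1$, and $G$ is regular.

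The main obstacle is making the lumping step rigorous when the walk starts at $o$ itself, including the first step out of $V_0$. I would handle it exactly as in the proof of Theorem~\ref{thm:stabHtW}. The probability of moving from $V_j$ to $V_i$ is $b^*_{ij}/\lambda_1$ independently of which vertex of $V_j$ the walk occupies, and this applies to $j=0$ as well. So the return time to $V_0$ is a function of the block chain alone. The remaining steps follow directly from the earlier results cited above.
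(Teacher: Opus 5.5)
Your proof is correct, but it takes a genuinely different route from the paper.

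The paper argues locally and without probability. Suppose adjacent vertices $v_0,v_1$ had $\nu_{v_0}<\nu_{v_1}$. Then $\nu_{v_0}/\nu_{v_1}$ is the weight-intersection number from the block of $P_{v_0}$ containing $v_1$ into $\{v_0\}$, so it is a fixed entry of the common matrix $B^*(c)$. Reading off the same entry in $P_{v_1},P_{v_2},\ldots$ produces a walk $v_0,v_1,v_2,\ldots$ along which $\nu$ grows geometrically. This is impossible in a finite graph, so $\nu$ is constant across every edge, and by connectivity it is constant.

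You argue globally instead. Strong lumpability of the maximal entropy walk holds also from the block $V_0$, as you correctly note. Combined with Kac's formula, it identifies $1/\nu_o^2$ with the mean return time to $V_0$ in the chain $T(B^*(c))$, so $\nu_o$ is determined by $B^*(c)$ alone.

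The two routes buy different things. The paper's argument is more elementary and uses only a single entry of $B^*(c)$ plus finiteness. Yours reuses the paper's own machinery (Theorems~\ref{thm:stabHtW} and~\ref{thm:equiW}). It also proves something stronger: in any weakly weight-$f$-equitable graph, $\nu_o$ depends only on the color $f(o,o)$, which gives the theorem when there is one color.

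If you want to avoid Kac's formula, the same conclusion follows algebraically. The vector with entries $\sum_{u\in V_i}\nu_u^2$ is a positive $\lambda_1$-eigenvector of the irreducible matrix $B^*(c)$ with entries summing to $1$. It is therefore the normalized Perron vector of $B^*(c)$, and its $V_0$-entry is $\nu_o^2$.
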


\begin{proof}
  Let $G$ be a weight-$f$-equitable graph. 
  If for all $v\in V(G), \nu_v$ are constants, then \revC{it's}{it is} clear that any weight-equitable partition $P_o$ is also an equitable partition.
  Hence, we only need to prove it.
  We assume there \revC{exists}{exist} two vertices $v_0$ and $v_1$ such that $\nu_{v_0}\neq \nu_{v_1}$, and\revC{}{,} without loss of generality, we assume $\nu_{v_0}<\nu_{v_1}$.
  Next, we show $v_0$ and $v_1$ are not adjacent.
    
  Since $G$ is weight-$f$-equitable, there exists a weight-equitable partition $P_{v_0}=\{V_{x_0}(v_0),\ldots,V_{x_r}(v_0)\}$, 
  \revA{and there exist an integer $l$ such that $v_1\in V_{x_l}(v_0)$.}
  {where $\{x_0\}$ is a color set of $f$.
  Then, there exists an integer $l\neq0$ such that $v_1\in V_{x_l}(v_0)$.}
  Therefore, if $v_0$ is adjacent to $v_1$, then we have $b^*_{x_l,x_0} = \frac{\nu_{v_0}}{\nu_{v_1}} < 1$.

  Next, we consider a weight-equitable partition $P_{v_1}=\{V_{x_0}(v_1),\ldots,V_{x_r}(v_1)\}$ 
  and choose a vertex $v_2$ from $V_{x_l}(v_1)$.
  Using the same approach as before, we have:
  \[
    b^*_{x_l,x_0}=\frac{\nu_{v_1}}{\nu_{v_2}}=\frac{\nu_{v_0}}{\nu_{v_1}}.
  \] 

  Then, we can repeat this process by considering $P_{v_2}$, and there exists a vertex $v_3$ in $V_{x_l}(v_2)$.
  By continuing this process, we obtain a sequence of vertices $\{v_0,v_1,\ldots,v_k\}$,
  where all \revC{$i\in [k]$,}{$i\in \{1,\ldots,k\}$,}
  \[
    \frac{\nu_{v_{i-1}}}{\nu_{v_{i}}}=\frac{\nu_{v_0}}{\nu_{v_1}}.
  \] 
  This implies
  \[
    \frac{\nu_{v_{0}}}{\nu_{v_k}}= (\frac{\nu_{v_0}}{\nu_{v_1}})^k.
  \]
  Since this process can be repeated infinitely, 
  there exists an infinitely small $\frac{\nu_{v_{0}}}{\nu_{v_k}}$, 
  which contradicts the fact that the graph has a finite number of vertices.
  Therefore, $v_0$ is not adjacent to $v_1$.
  From the above, we obtain that if $\nu_{v_0}\neq \nu_{v_1}$, then $v_0$ is not adjacent to $v_1$.
  From the contrapositive, for any pair of vertices $v_0$ and $v_1$ \revC{which}{that} are adjacent to each other, $\nu_{v_0}=\nu_{v_1}$.
  Since $G$ is a connected graph, $\nu_v$ must be constant for all $v\in V(G)$. 
\end{proof}

%%%%%%%%%%%%%%%%%%%%%%%%%%%%%%%%%%%%%%%%%%%%%%%%%%%%%%%%%%%%%%%%%%%%%%%%%%%%%%%%%%%%%%%%%%
\section{Applications}\label{sec:pseudodistanceregularized}
%%%%%%%%%%%%%%%%%%%%%%%%%%%%%%%%%%%%%%%%%%%%%%%%%%%%%%%%%%%%%%%%%%%%%%%%%%%%%%%%%%%%%%%%%%

Here\revC{}{,} we will use the framework shown in Section \ref{sec:weightequitable} to obtain the average hitting time of several graph classes with a high degree of regularity. 
In particular, we will apply it to pseudo-distance-regular graphs, distance-regularized graphs (which include the well-known distance-regular and distance biregular graphs), $t$-distance-regular graphs and cone graphs. 
In particular, since distance-regularized graphs and cone graphs are weakly $f$-equitable, where $f$ has ``good'' properties, we can apply Theorem~\ref{thm:equiW} to compute the average hitting times in these graph classes.

%%%%%%%%%%%%%%%%%%%%%%%%%%%%%%%%%%%%%%%%%%%%%%%%%%%%%%%%%%%%%%%%%%%%%%%%%%%%%%%%%%%%%%%%%%
\subsection{The average hitting time of pseudo-distance-regular
graphs}\label{sec:pseudodrg}
%%%%%%%%%%%%%%%%%%%%%%%%%%%%%%%%%%%%%%%%%%%%%%%%%%%%%%%%%%%%%%%%%%%%%%%%%%%%%%%%%%%%%%%%%%

In this section, we will use Theorem \ref{thm:stabHtW} to obtain the average hitting time of pseudo-distance-regular graphs. This graph class was introduced by Fiol \cite{F2001} as a natural generalization of distance-regular graphs\revC{}{,} which is intended for not necessarily regular graphs. Since their introduction, this graph class has received quite some attention in the literature, \revC{see e.g.~\cite{jstb08,jsj2010}}{see, e.g., Jafarizadeh, Sufiani, Taghavi, and Barati~\cite{jstb08} and Jafarizadeh, Sufiani, and Jafarizadeh~\cite{jsj2010}}. 

\begin{df}\label{df:pdrg}
A graph $G$ is called \emph{pseudo-distance-regular} around vertex $v$ if the distance partition 
  \[
    P=\{\G_0(v)=\{v\},\G_1(v),\ldots,\G_{d}(v)\}
  \] 
  is weight-equitable.
We refer to the weight-regular quotient matrix of a pseudo-distance-regular graph around $v$ as the weight-regular quotient matrix of $P$.
\end{df}

    Let $G$ be a pseudo-distance-regular graph around $v$.
    Then, the weight-regular quotient matrix of a distance partition from $v$, denoted as $B^*$, is always a tridiagonal matrix.
    When
    \begin{align*}
    B^*&=\begin{pmatrix}
      0 & c_1^* &  & &&\\
      b_0^* & a_1^* &&&&\\
       & b_1^* &&&&\\
      &     &&\ddots &c_{d-1}^*&\\
    &&&&a_{d-1}^* &c_{d}^*\\
      &&&&b_{d-1}^*&a_{d}^*
    \end{pmatrix},
  \end{align*}
we call the array
\[
\left[\begin{array}{ccccc}
\ast		&	c_1^*	&	\cdots	&	c_{d-1}^*	&	c_{d}^*\\
0		&	a_1^*	&	\cdots	&	a_{d-1}^*	&	a_{d}^*\\
b_0^*	&	b_1^*	&	\cdots	&	b_{d-1}^*	&	\ast
\end{array}\right]
\]
the \emph{pseudo-intersection numbers}.

Since pseudo-distance-regular graphs are defined by a specific vertex property, we can apply Theorem \ref{thm:stabHtW} and obtain the following result.

\begin{thm}\label{thm:pseudo}
Let $G$ be pseudo-distance-regular around vertex $v$, and let the array
  \[
  \left[\begin{array}{ccccc}
  \ast & c_1 & \cdots & c_{d-1} & c_d \\
  0 & a_1 & \cdots & a_{d-1} & a_d \\
  b_0 & b_1 & \cdots & b_{d-1} & \ast
  \end{array}\right]
  \]
  be the pseudo-intersection numbers of $G$. Define
    \begin{align*}
    B^*&=\begin{pmatrix}
      0 & c_1^* &  & &&\\
      b_0^* & a_1^* &&&&\\
       & b_1^* &&&&\\
      &     &&\ddots &c_{d-1}^*&\\
    &&&&a_{d-1}^* &c_{d}^*\\
      &&&&b_{d-1}^*&a_{d}^*
    \end{pmatrix}.
  \end{align*}
  Then,
  \[
    H_m(G;(v,u))  = \bold{H}(\frac{1}{\lambda_1}(B^*)^\top_{-\G_0})_{\G_i},
  \]
  \revA{}{where $\G_i$ denotes the $i$-th row or column of the matrix.}
  Moreover, if $P=\{G_0(v),G_1(v),\ldots,G_{d}(v)\}$ is equitable, then
  \[
    H(G;(v,u))  = \bold{H}(T(B)_{-\G_0})_{\G_i},
  \]
  where $B=D_\nu^{-1}(B^*)^\top D_\nu$.
\end{thm}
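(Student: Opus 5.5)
The plan is to apply Theorem~\ref{thm:stabHtW} to the distance partition $P=\{\G_0(v),\ldots,\G_d(v)\}$. By Definition~\ref{df:pdrg}, $P$ is weight-equitable and $\G_0(v)=\{v\}$, so $P$ is a stabilized weight-equitable partition centered at $v$ (Definition~\ref{df:stabW}). Its weight-quotient graph $G_{B^*}$ has adjacency matrix $B^*$, the tridiagonal matrix built from the pseudo-intersection numbers. Since $\G_i(v)\neq\emptyset$ for $0\le i\le d$, every block appears in the weight-quotient graph. Theorem~\ref{thm:stabHtW} then gives $H_m(G;(v,u))=H(G_{B^*};(\G_0,\G_i))$ for $u\in\G_i(v)$.

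The next step is to rewrite this hitting time on the quotient graph via \eqref{eq:ht}: $H(G_{B^*};(\G_0,\G_i))=\bold{H}(T(B^*)_{-\G_0})_{\G_i}$. The point to check is that $T(B^*)=\frac{1}{\lambda_1}(B^*)^\top$ in the orientation used by the paper. In the proof of Theorem~\ref{thm:stabHtW} the block-level transition probabilities were $b^*_{ij}/\lambda_1$, because the column sums of $B^*$ equal the weight-degree $\lambda_1$. So the normalization is a division by $\lambda_1$, and only the transpose has to be matched to the row/column convention of $\bold{H}(\cdot)=-\allone(\cdot-I)^{-1}$ together with the $\G_0$-deletion. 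I expect this index bookkeeping, namely making sure that transposition, deletion of the $\G_0$ row and column, and the use of a row vector $\allone$ agree with the statement, to be the main (if mundane) obstacle. I would handle it by writing the first-step equations $h_j=1+\sum_{i\neq 0}p_{ij}h_i$ and checking that they match $\bold{h}=\allone+\bold{h}M$ with $M=\frac{1}{\lambda_1}(B^*)^\top_{-\G_0}$ (resp.\ its transpose) in the convention of \eqref{eq:ht}.

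For the ``moreover'' part, if $P$ is equitable then Theorem~\ref{thm:stabHt} gives $H(G;(v,u))=H(G_B;(\G_0,\G_i))$, where $B$ is the ordinary quotient matrix. Applying \eqref{eq:ht} again yields $\bold{H}(T(B)_{-\G_0})_{\G_i}$. It remains to identify $B$ with the expression in the statement. Theorem~\ref{thm:eqRw} gives $b^*_{ij}=\frac{\nu_j}{\nu_i}b_{ij}$, where $\nu$ is constant on blocks by the same theorem, that is, $B^*=D_\nu^{-1}BD_\nu$ with $D_\nu$ the diagonal of block values. Inverting this conjugation, and adjusting by the transpose convention used above, recovers the formula $B=D_\nu^{-1}(B^*)^\top D_\nu$ of the statement. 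Here I would again verify the orientation carefully against how $B^*$ was displayed.
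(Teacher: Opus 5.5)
Your proposal is essentially the paper's proof: $P$ is a stabilized weight-equitable partition, so Theorem~\ref{thm:stabHtW} with Equation~\eqref{eq:ht} gives the first formula, and in the equitable case Theorem~\ref{thm:stabHt} gives the second (your use of Theorem~\ref{thm:eqRw} to identify $B$ is a small addition the paper leaves implicit). The orientation check you defer is skipped in the paper too, and it does not come out as you expect: for $B^*$ as displayed, column $j$ lists the weights out of $\G_j$, so its columns sum to $\lambda_1$, giving $T(B^*)=\frac{1}{\lambda_1}B^*$ and $B=D_\nu^{-1}B^*D_\nu$ with no transposes; the transposes in the statement are right only if $B^*$ is read in the row convention of Theorem~\ref{thm:eqRw}, i.e.\ as the transpose of the displayed matrix, which the path on three vertices viewed from an end already shows.
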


\begin{proof}
  From \revC{the}{} Definition \ref{df:pdrg} of pseudo-distance-regular around $v$,
  it follows that $P$ is a stabilized weight-equitable partition.
  Then, using Theorem \ref{thm:stabHtW}, we obtain 
  \[
    H_m(G;(v,u))  = H(G_{B^*};(\G_0,\G_i)),
  \]
  where $G_{B^*}$ is \revC{}{the} graph whose adjacency matrix is $B^*$.
  From Equation \ref{eq:ht},  
\[
    H(G_{B^*};(\G_0,\G_i))  = \bold{H}(\frac{1}{\lambda_1}(B^*)^\top_{-\G_0})_{\G_i}.
\]
  If $P$ is an equitable partition, then the quotient matrix of $P$ is $B$, and applying Theorem \ref{thm:stabHt}, we obtain the second part of the statement. 
\end{proof}

%%%%%%%%%%%%%%%%%%%%%%%%%%%%%%%%%%%%%%%%%%%%%%%%%%%%%%%%%%%%%%%%%%%%%%%%%%%%%%%%%%%%%%%%%%
\subsection{The average hitting times of distance-regularized
graphs}\label{sec:distanceregularized}
%%%%%%%%%%%%%%%%%%%%%%%%%%%%%%%%%%%%%%%%%%%%%%%%%%%%%%%%%%%%%%%%%%%%%%%%%%%%%%%%%%%%%%%%%%

In this section\revC{}{,} we show the average hitting times of distance-regularized graphs. Godsil and Shawe-Taylor \cite{gs1987} introduced the class of distance-regularized graphs as a common generalization for distance-regular graphs and generalized polygons, and they showed that a distance-regularized graph must be either distance-regular or distance-biregular. Our results extend and unify results from \revC{\cite{B1993,cej2022,N2023}}{Biggs~\cite{B1993}, Carmona, Encinas, Jiménez~\cite{cej2022}, and Nishimura~\cite{N2023}}. 

Let $G$ be a connected graph with diameter $d$. For each $i=0,1,\dots,d$, we write $\G_i(u)$ for the set of vertices in $G$ at distance $i$  from vertex $u$. 
For any two vertices $u$ and $v$ with $v \in \G_i(u)$, we define 
\[\begin{aligned}
c(u,v)&=|\G_{i-1}(u)\cap \G_1(v)|,\\
a(u,v)&=|\G_{i}(u)\cap \G_1(v)|,\\
b(u,v)&=|\G_{i+1}(u)\cap \G_1(v)|,
\end{aligned}\]
\revC{with}{where} $c(u,v)$ \revC{}{is} undefined when $i=0$ and $b(u,v)$ \revC{}{is} undefined when $i=d$. 
We say that  vertex $u$ is \emph{distance-regularized} if the numbers $c(u,v), a(u,v)$ and $b(u,v)$ are independent of the choice of $v$ in $\G_i(u)$, and in this case\revC{}{,} we denote them by $c_{i}(u), a_{i}(u)$ and $b_{i}(u)$, respectively.
 A \emph{distance-regularized} graph is a connected graph in which every vertex is distance-regularized. 
If $G$ is distance-regularized, the array
\[
\iota(u)
=
\left[\begin{array}{ccccc}
\ast		&	c_1(u)	&	\cdots	&	c_{d-1}(u)	&	c_d(u)\\
0		&	a_1(u)	&	\cdots	&	a_{d-1}(u)	&	a_d(u)\\
b_0(u)	&	b_1(u)	&	\cdots	&	b_{d-1}(u)	&	\ast
\end{array}\right]
\]
is called the \emph{intersection array} of vertex $u$. 

Next\revC{}{,} we can use \revC{the previous}{} Theorem \ref{thm:pseudo} to compute the average hitting times in distance-regularized graphs by just using their intersection arrays.

\begin{thm}\label{thm:dbrgHT}
  Let $G$ be a distance-regularized 
   graph with \revC{a}{an} intersection array
  \[
\iota(v)
=
\left[\begin{array}{ccccc}
\ast		&	c_1(v)	&	\cdots	&	c_{d-1}(v)	&	c_d(v)\\
0		&	a_1(v)	&	\cdots	&	a_{d-1}(v)	&	a_d(v)\\
b_0(v)	&	b_1(v)	&	\cdots	&	b_{d-1}(v)	&	\ast
\end{array}\right],
\]
  and define 
  \begin{align*}
    B(v)&=\begin{pmatrix}
      0 & c_1(v) &  & &&\\
      b_0(v) & a_1(v) &&&&\\
       & b_1(v) &&&&\\
      &     &&\ddots &c_{d-1}(v)&\\
    &&&&a_{d-1}(v) &c_{d}(v)\\
      &&&&b_{d-1}(v)&a_{d}(v)
    \end{pmatrix}.
  \end{align*}
  Then,
  \begin{align*}
    H(G;(v,u))=H_m(G;(v,u))=\bold{H}(T(B(v))_{-\G_0})_{\G_i},
  \end{align*}
  where $\G_i$ denotes the $i$-th row or column of the matrix.
\end{thm}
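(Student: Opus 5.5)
The plan is to recognise the distance partition $P_v=\{\G_0(v),\G_1(v),\ldots,\G_d(v)\}$ as a stabilized equitable partition centred at $v$. Then the simple random walk statement follows from Theorem~\ref{thm:stabHt} together with Equation~\eqref{eq:ht}. The equality with the maximal entropy walk needs a separate argument, which I describe below.

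\textbf{First step.} Since every vertex of $G$ is distance-regularized, for each $u\in\G_i(v)$ the numbers of neighbours of $u$ in $\G_{i-1}(v)$, $\G_i(v)$ and $\G_{i+1}(v)$ are $c_i(v)$, $a_i(v)$ and $b_i(v)$. Vertices in $\G_i(v)$ have no neighbours in $\G_j(v)$ for $|i-j|\ge 2$. Hence $P_v$ is equitable with $\G_0(v)=\{v\}$, and its quotient matrix is exactly the tridiagonal matrix $B(v)$ of the statement. Theorem~\ref{thm:stabHt} gives $H(G;(v,u))=H(G_{B(v)};(\G_0,\G_i))$ for $u\in\G_i(v)$. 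By Equation~\eqref{eq:ht} applied to the quotient graph, this equals $\bold{H}(T(B(v))_{-\G_0})_{\G_i}$.

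\textbf{Second step: $H_m=H$.} I would use the Godsil--Shawe-Taylor dichotomy quoted above: a distance-regularized graph is either distance-regular or distance-biregular.

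In the distance-regular case $G$ is regular. The remark in Section~\ref{sec:pre} then shows that the maximal entropy walk and the simple random walk coincide, since $\boldsymbol{\nu}$ is constant and $\frac{1}{\lambda_1}D_\nu AD_\nu^{-1}=AD^{-1}$.

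In the distance-biregular case $G$ is bipartite and biregular with degrees $k_1$ and $k_2$ on the two sides. The Perron vector is constant on each side, with ratio $\sqrt{k_1}:\sqrt{k_2}$ (up to the correct orientation), and $\lambda_1=\sqrt{k_1k_2}$. I would check directly that, for an edge $xy$ with $x$ of degree $k_1$,
\[
\frac{1}{\lambda_1}\frac{\nu_y}{\nu_x}=\frac{1}{k_1}.
\]
This amounts to choosing $\nu_x\propto\sqrt{k_1}$ on the degree-$k_1$ side; indeed the eigen-equation $\lambda_1\nu_x=k_1\nu_y$ holds with $\nu_x=\sqrt{k_1}$ and $\nu_y=\sqrt{k_2}$. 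Thus $\frac{1}{\lambda_1}D_\nu AD_\nu^{-1}=AD^{-1}$ again, and the two walks coincide.

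An alternative route is to note that $\nu$ is constant on each $\G_i(v)$, because each $\G_i(v)$ lies on one side of the bipartition. Theorem~\ref{thm:eqRw} then gives $B^*=D_\nu B D_\nu^{-1}$. Theorem~\ref{thm:stabHtW} yields the transition matrix $\frac{1}{\lambda_1}(B^*)^\top$, which one would compare with $T(B(v))$. I expect the direct comparison of transition matrices to be cleaner.

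\textbf{Main obstacle.} The delicate step is the biregular case. There one must ensure that the Perron vector really is two-valued with the stated ratio, and verify that the transition matrices agree. It also has to be handled consistently with the paper's column-stochastic convention $T(A)$.

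Once the two walks are shown to coincide as stochastic processes on $G$, their average hitting times agree. Combined with the first step, this gives the full chain of equalities in the statement.
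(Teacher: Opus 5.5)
Your proposal is correct and follows essentially the paper's proof. The paper also gets $H=H_m$ from the Godsil--Shawe-Taylor dichotomy together with Lemma~\ref{lem:ht_eq_rht}, whose biregular computation with $\nu\propto\sqrt{\deg}$ on each side is exactly what you redo. It then gets the formula from the distance partition being a stabilized equitable partition with quotient matrix $B(v)$; the only difference is that it routes this through Theorem~\ref{thm:pseudo}, whereas you apply Theorem~\ref{thm:stabHt} and Equation~\eqref{eq:ht} directly, which is cosmetic.
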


In order to prove Theorem~\ref{thm:dbrgHT}\revC{}{,} we will use the next preliminary result, which \revC{proivdes}{provides} the maximal entropy random walk in regular and biregular graphs. Recall that a graph $G=(V,E)$ is \emph{biregular} if $G$ is bipartite with $V=V_0 \cup V_1$, where the degree of each vertex in $V_0$ and the degree of each vertex in $V_1$ are (possibly different) constants.     

\begin{lem}\label{lem:ht_eq_rht}
  Let $G$ be any regular or biregular graph with adjacency matrix $A$.
  Define $\lambda_1$ as the maximum eigenvalue of $A, \nu$ as the eigenvector corresponding to $\lambda_1, D_\nu=\diag(\nu_1,\ldots,\nu_n)$ and $D=\diag(\deg(v_1),\ldots,\deg(v_n))$.
  Then, 
  \[
    \frac{1}{\lambda_1}D_\nu AD_\nu^{-1}=AD^{-1}.
  \]
In particular, $H_m(G;v,u)=H(G;v,u)$.
\end{lem}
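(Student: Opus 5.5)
The plan is to compute the Perron eigenvector explicitly in both cases and then check the matrix identity entrywise. Recall from the Remark in Section~\ref{sec:pre} that $T(A)=AD^{-1}$ and $T(D_\nu A)=\frac{1}{\lambda_1}D_\nu A D_\nu^{-1}$. The $(i,j)$-entry of the left-hand side is $\frac{a_{ij}\nu_i}{\lambda_1\nu_j}$, and that of the right-hand side is $\frac{a_{ij}}{\deg(v_j)}$. It therefore suffices to show that, for every edge $v_iv_j$,
\[
\frac{\nu_i}{\nu_j}=\frac{\lambda_1}{\deg(v_j)}.
\]
Non-edges contribute zero to both sides.

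\textbf{Regular case.} If $G$ is $k$-regular, then $\lambda_1=k$ and $\boldsymbol{\nu}=\frac{1}{\sqrt n}\j$. Both sides of the displayed identity equal $1$, so it holds.

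\textbf{Biregular case.} Let the bipartition be $V=V_0\cup V_1$, with degrees $k_0$ on $V_0$ and $k_1$ on $V_1$. I will guess the eigenvector $\nu_x=\alpha$ for $x\in V_0$ and $\nu_x=\beta$ for $x\in V_1$, with $\alpha,\beta>0$.
\begin{itemize}
\item The equation $A\boldsymbol{\nu}=\lambda\boldsymbol{\nu}$ at a vertex of $V_0$ reads $k_0\beta=\lambda\alpha$, since all its neighbours lie in $V_1$.
\item At a vertex of $V_1$ it reads $k_1\alpha=\lambda\beta$.
\item Hence $\lambda=\sqrt{k_0k_1}$ and $\alpha/\beta=\sqrt{k_0/k_1}$, which is a positive vector.
\end{itemize}
By the uniqueness part of Perron--Frobenius quoted in Section~\ref{sec:pre} (only $\lambda_1$ has a positive eigenvector, using connectivity), this vector is the Perron eigenvector, after normalization, and $\lambda_1=\sqrt{k_0k_1}$.

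Now take an edge $v_iv_j$. Say $v_j\in V_0$, so $v_i\in V_1$. Then
\[
\frac{\nu_i}{\nu_j}=\frac{\beta}{\alpha}=\sqrt{\frac{k_1}{k_0}}=\frac{\sqrt{k_0k_1}}{k_0}=\frac{\lambda_1}{\deg(v_j)}.
\]
The case $v_j\in V_1$ is symmetric.

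\textbf{Hitting times.} Since the two walks have the same transition matrix, they are the same Markov chain. By \eqref{eq:ht}, the hitting time vectors $\bold{H}(T(\cdot)_{-v})$ coincide, which gives $H_m(G;(v,u))=H(G;(v,u))$.

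The only delicate step is justifying that the guessed vector is the Perron eigenvector. This rests on connectivity and the uniqueness of a positive eigenvector, which the paper assumes throughout.
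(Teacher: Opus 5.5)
Your proof is correct and follows essentially the same route as the paper: identify the Perron eigenvector explicitly (constant in the regular case; proportional to $\sqrt{k_0}$ on $V_0$ and $\sqrt{k_1}$ on $V_1$ in the biregular case) and verify the matrix identity directly. The only differences are cosmetic: you check the identity entrywise on edges rather than by block-matrix multiplication, and you derive $\lambda_1=\sqrt{k_0k_1}$ from the eigen-equations plus Perron--Frobenius uniqueness instead of citing it.
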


\begin{proof}
  If $G$ is a $k$-regular graph, then $D=kI, \lambda_1=k,$ and $\nu=\frac{1}{\sqrt{n}}\allone$, which implies $\frac{1}{\lambda_1}D_\nu AD_\nu^{-1}=AD^{-1}$.

  When $G$ is biregular, \revC{}{the} vertex set of $G$ can be denoted as $V_0 \cup V_1$, where the degree of each vertex in $V_0$ and the degree of each vertex in $V_1$ are constants.
  We denote these two constants as $k_0$ and $k_1$, such that each vertex in $V_0$ has $k_0$ neighbors and each vertex in $V_1$ has $k_1$ neighbors. 
  Then, we can write $A$ and $AD^{-1}$ as follows:

    \[
        A = \begin{pmatrix}
            0 & B \\
            B^{\top} & 0
        \end{pmatrix}, \quad AD^{-1} = \begin{pmatrix}
            0 & \frac{1}{k_1}B \\
            \frac{1}{k_0}B^{\top} & 0
        \end{pmatrix}.
    \]
  
  Since $G$ is biregular, we know (\revC{see e.g. \cite{ADF2013}}{see, e.g., Abiad, Dalf\'o, and Fiol~\cite{ADF2013}}) that the largest eigenvalue of $A$ is $\sqrt{k_0k_1}$\revC{}{,} and
  the vector $\revC{\nu}{\boldsymbol{\nu}}=(\sqrt{k_0},\ldots,\sqrt{k_0},\sqrt{k_1},\ldots,\sqrt{k_1})^{\top}$ is an eigenvector of $A$ with eigenvalue $\sqrt{k_0k_1}$. 
  This implies \[D_\nu=\frac{1}{\sqrt{|V_0|k_0+|V_1|k_1}}\begin{pmatrix}
    \sqrt{k_0}I & 0\\
    0 & \sqrt{k_1}I
  \end{pmatrix}\]
  and
  \begin{align*}
    \frac{1}{\lambda_1}D_\nu AD_\nu^{-1}&=\frac{1}{\sqrt{k_0k_1}}\begin{pmatrix}
      \sqrt{k_0}I & 0\\
      0 & \sqrt{k_1}I
    \end{pmatrix} \begin{pmatrix}
      0&B\\
      B^{\top}&0
    \end{pmatrix}\begin{pmatrix}
      \frac{1}{\sqrt{k_0}}I & 0\\
      0 & \frac{1}{\sqrt{k_1}}I
    \end{pmatrix}\\
    &=\begin{pmatrix}
      0&\frac{1}{k_1}B\\
      \frac{1}{k_0}B^{\top}&0
    \end{pmatrix}\\
    &=AD^{-1}.
  \end{align*}

 Since $\frac{1}{\lambda_1} D_\nu A D_\nu^{-1} = A D^{-1}$ holds, the transition probabilities of the maximal entropy random walk on $G$ and that of the simple random walk on $G$ are the same. This implies that the average hitting times of the maximal entropy random walk and the simple random walk are identical. 
\end{proof}

\begin{proof}[Proof of Theorem~\ref{thm:dbrgHT}]
Since it is known that a distance-regularized graph is either distance-regular or distance-biregular \revC{\cite{gs1987}}{(Godsil and Shawe-Taylor~\cite{gs1987})}, then $G$ is regular or biregular. Therefore, from Lemma \ref{lem:ht_eq_rht}, we obtain $H(G;(v,u))=H_m(G;(v,u))$.

Since $G$ is distance-regularized, $G$ is also pseudo-distance-regular around vertex $v$\revC{}{,} and $B(v)$ is a quotient matrix of $G$ around vertex $v$.
  Therefore, from Theorem \ref{thm:pseudo} and \revA{Equation \ref{eq:ht}}{Equation (\ref{eq:ht}) in Section~\ref{sec:pre}}, it follows that
  \begin{align*}
   H(G;(v,u))  = \bold{H}(T(B(v))_{-\G_0})_{\G_i}.
& \qedhere \end{align*}
\end{proof}

\begin{rem}
  We can also obtain Theorem \ref{thm:dbrgHT} from the fact that distance-regularized graphs are weakly $f$-equitable.
  We define $d_G(v,u)$ as the distance between vertex $u$ and vertex $v$ in $G$, and let $D_v=\max\{d_G(v,u) \mid u \in V\}$. 
  We also define $d_G^+(u,v): V \times V \rightarrow \{1, \ldots, D_u\} \cup V$ as a function such that if $u = v$, then $d_G^+(v,u) = v$; otherwise, $d_G^+(v,u) = d_G(v,u)$.  
  From this definition, $G$ is distance-regularized if and only if $G$ is weakly $d_G^+$-equitable.
  Therefore, using Theorem \ref{thm:equiW}, we can obtain Theorem \ref{thm:dbrgHT}.
\end{rem}

\begin{rem}
  Because a distance-regularized graph is either distance-regular or distance-biregular, there are at most two types of $B(v)$.
  If $B(v)$ does not depend on the choice of $v$, then $G$ is a distance-regular graph.
  Otherwise, $G$ is a distance-biregular, and there exist matrices $B_1$ and $B_2$ such that $B(v)=B_1$ or $B(v)=B_2$.
\end{rem}

From Theorem \ref{thm:dbrgHT}\revC{}{,} we obtain the following known corollary for distance-regular graphs \revC{\cite{B1993}}{(Biggs
~\cite{B1993})}, which was first shown using ideas from potential theory.

\begin{cor}[\cite{B1993}]
  Let $G$ be a distance-regular graph with intersection array $\{b_0=k,b_1,\ldots,b_{\Delta_G-1};c_1=1,\ldots,c_{\Delta_G}\}$,
  and define
  \begin{align*}
    B&=\begin{pmatrix}
      0 & c_1 &  & &&\\
      b_0 & a_1 &&&&\\
       & b_1 &&&&\\
      &     &&\ddots &c_{\Delta_G-1}&\\
    &&&&a_{\Delta_G-1} &c_{\Delta_G}\\
      &&&&b_{\Delta_G-1}&a_{\Delta_G}
    \end{pmatrix}.
  \end{align*}
  Then,
  \[
    H(G;(v,u))=\bold{H}(T(B)_{-\G_0})_{\G_i},
  \]
  \revA{}{where $\G_i$ denotes the $i$-th row or column of the matrix.}
  \end{cor}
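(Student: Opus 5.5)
The corollary follows by specialising Theorem~\ref{thm:dbrgHT} to the distance-regular case. A distance-regular graph $G$ is distance-regularized in the sense of Section~\ref{sec:distanceregularized}. For every vertex $v$ and every $u\in\G_i(v)$, the numbers $c(v,u)=|\G_{i-1}(v)\cap\G_1(u)|$, $a(v,u)$ and $b(v,u)$ equal the global intersection numbers $c_i$, $a_i$, $b_i$. These are independent both of $u\in\G_i(v)$ and of $v$ itself.

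The plan has three steps.

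\begin{enumerate}
\item Verify that every vertex $v$ is distance-regularized with intersection array $\iota(v)$ equal to the given array $\{b_0=k,b_1,\ldots,b_{\Delta_G-1};c_1=1,\ldots,c_{\Delta_G}\}$. The middle row is determined by $a_i=k-b_i-c_i$, and the diameter is $d=\Delta_G$. Consequently $B(v)=B$ for all $v$, so $B$ is the common matrix appearing in the statement.

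\item Apply Theorem~\ref{thm:dbrgHT}. It gives
\[
H(G;(v,u))=\bold{H}(T(B(v))_{-\G_0})_{\G_i}=\bold{H}(T(B)_{-\G_0})_{\G_i}
\]
for $u\in\G_i(v)$, where $i=d_G(v,u)$.

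\item Check that the two conventions agree, namely that the rows and columns of $B$ are indexed by the distance classes $\G_0,\ldots,\G_{\Delta_G}$ in that order. Once this is confirmed, the hitting time from $u$ to $v$ is read off at index $\G_i$.
\end{enumerate}

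There is no real obstacle here. The only care needed is bookkeeping: confirm that the orientation of $B$, with $b_{i}$ below the diagonal and $c_{i+1}$ above it, matches the column convention $b_{ij}=|G(x)\cap V_i|$ for $x\in V_j$ used in defining quotient matrices. Since vertices in $\G_j$ have $b_j$ neighbours in $\G_{j+1}$ and $c_j$ in $\G_{j-1}$, the entry $(j+1,j)$ is $b_j$ and the entry $(j-1,j)$ is $c_j$, exactly as displayed.

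Alternatively, one can bypass Theorem~\ref{thm:dbrgHT} and apply Theorem~\ref{thm:equi_old} directly, with $f=d_G$ and the distance partition, which is equitable with quotient matrix $B$. Combined with Equation~(\ref{eq:ht}) applied to the quotient graph, this yields the same formula.
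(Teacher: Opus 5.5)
Your proposal is correct and matches the paper's route: the paper obtains this corollary directly by specializing Theorem~\ref{thm:dbrgHT} to the distance-regular case, where $B(v)=B$ for every vertex $v$. Your orientation check on $B$ against the column convention for quotient matrices is sound, and so is your alternative via Theorem~\ref{thm:equi_old} with $f=d_G$.
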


Similarly, from Theorem \ref{thm:dbrgHT}, we can also derive the average hitting time for distance-biregular graphs, which has been recently shown by \revC{Carmona et al. \cite{cej2022}}{Carmona, Encinas and Jiménez~\cite{cej2022}} using potential theory techniques.

%%%%%%%%%%%%%%%%%%%%%%%%%%%%%%%%%%%%%%%%%%%%%%%%%%%%%%%%%%%%%%%%%%%%%%%%%%%%%%%%%%%%%%%%% 
\subsection{The average hitting time of graphs in $t$-distance-regular graphs}\label{sec:as}
%%%%%%%%%%%%%%%%%%%%%%%%%%%%%%%%%%%%%%%%%%%%%%%%%%%%%%%%%%%%%%%%%%%%%%%%%%%%%%%%%%%%%%%%%%

In this section\revC{}{,} we compute the average hitting time in $t$-distance-regular graphs.  
In fact, since $t$-distance-regular graphs are always $f$-equitable, we can compute it using Theorem \ref{thm:equi_old}.

First, we need \revC{}{to} show that the graphs obtained by the corresponding association \revC{scheme}{schemes} are $f$-equitable.

Let $G_1,\dots, G_d$ be a set of graphs with a common vertex set $X$, and assume that $G_i$ and $G_j$ do not have common edges for all $i, j \in \{1, 2, \dots, d\}$. 
Then, for each $v \in X$, we can consider a partition $\pi(v)$ of $X$ as follows:
\[
  \pi(v) = \{N_{G_0}(v), N_{G_1}(v), \dots, N_{G_d}(v)\},
\]
where $N_{G_i}(v)$ represents the neighborhood set of the vertex $v$ in the graph $G_i$.
\revA{Additionally, define the function $f: X \times X \longrightarrow \{0, \dots, d\}$ as $f(v,u) = j$, where $u$ is adjacent to $v$ in $G_j$ and 
define $A_i$ as the adjacency matrix of $G_i$, and $A_0$ as the identity matrix.}
{
Note that $ \pi(v) $ can also be defined by a specific function. 
Define $ f: X \times X \longrightarrow \{0, \dots, d\} $ such that $ f(v,v)=0 $ for all $ v\in X $, and $ f(v,u) = j $ if $ u $ is adjacent to $ v $ in $ G_j $. For a given $ v \in X $, define $ F_i(v)=\{u\in X \mid f(v,u)=i\} $ as a subset of $ X $. 
Then,
\[
  \pi(v)=\{F_{0}(v),F_{1}(v),\ldots,F_{d}(v)\}.
\]
A symmetric association scheme is defined by the adjacency matrices of $ \{G_i\}_{i=0}^d $ satisfying specific conditions. 
Let $ A_i $ be the adjacency matrix of $ G_i $, where $ A_0 $ is the identity matrix.
}
From \revC{\cite{gm1995}}{Godsil and Martin~\cite{gm1995}}, the set $\{A_0, \dots, A_d\}$ forms a symmetric association scheme if and only if, for each $i \in \{1, \dots, d\}$, the partition $\pi(v)$ is equitable for $G_i$, and the quotient matrix of it only depends on $i$. 
Let \revA{$p_{ij}^k = P^k$}{$p_{ij}^k$} be the intersection number of $\{A_0, A_1, \dots, A_d\}$. Then, \revA{the quotient matrix of $G_i$ with respect to the partition $\pi(v)$ is $P^i$.}
{the quotient matrix of $G_k$ with respect to the partition $\pi(v)$, denoted by $P^k$, is given by $P^k=(p_{ij}^k)$.
Note that for simplicity, we index $P^k$ by $\{0,\ldots,d\}$ instead of $\{F_0(v),\ldots,F_d(v)\}$.}

This implies that $G_i$ is $f$-equitable for all $i \in \{1, \dots, d\}$. 
Therefore, from Theorem \ref{thm:equi_old}, we obtain the average hitting times in $G_i$ from vertex $u$ to $v$ as follows:
\[
  H(G_i; (v,u)) = \bold{H}(P^i_{-0})_{f(v,u)}.
\]

The concept of $f$-equitable graph can be extended similarly to an association scheme.

\begin{df}
Let $G$ be a graph with vertex set $V$, and let $A$ be the adjacency matrix of $G$.
Define $f$ as a function $f : V \times V \to \{0, 1, \dots, r\}$, and define $A_i$ as the matrix whose $uv$ entry is $1$ if $f(u,v) = i$.
Then, $G$ is \emph{$f$-equitable} if and only if $A_0, A_1, \dots, A_r$ satisfy the following properties:
\begin{itemize}
  \item $A_0 = I$.
  \item $\sum_{j=0}^r A_j = J$, where $J$ is the all-ones matrix.
  \item For all $i$ satisfying $0 \leq i \leq r$, there exist integers $q_{ij}$ such that 
  \[
    A_i A = \sum_{j=0}^r q_{ij} A_j.
  \] 
\end{itemize}
\end{df}
As in the previous case, $G$ is \emph{weakly $f$-equitable} if and only if $A_0, A_1, \dots, A_r$ satisfy the following properties:
\begin{itemize}
  \item There exists $c$ such that $\sum_{i=0}^{c}A_i=I$.
  \item $\sum_{j=0}^r A_j=J$.
  \item For all $i$ satisfying $0 \leq i \leq r$, there exist integers $q_{ij}$ such that 
  \[
    A_i A = \sum_{j=0}^r q_{ij}A_j.
  \] 
\end{itemize}

Therefore, \revC{$f$-equitable}{$f$-equitablity} is a generalization of an association scheme, and weakly \revC{$f$-equitable}{$f$-equitablity} is a generalization of a coherent configuration.

We will use the above concepts to obtain the average hitting time of  $t$-distance-regular graphs, which we define next.

  Let $G$ be a graph with edge set $E$, and let $E_1,\ldots, E_t$ be a partition of $E$.  
Let $\xi=(v_1,\ldots, v_L)$ be a walk on $G$, and define the $t$-\emph{length} of $\xi$, denoted as $\revC{l_t(\xi)}{\boldsymbol{l_t(\xi)}}$, as the vector in $\mathbb{N}^t$ as follows:  
\revA{\[
  l_t(\xi)=(|\{j\mid v_j\in E_1\}|,|\{j\mid v_j\in E_2\}|,\ldots,|\{j\mid v_j\in E_t\}|).
\]  }
{
\[
  \boldsymbol{l_t(\xi)}=(|\{j\in\{1,\ldots,L\}\mid v_j\in E_1\}|,\ldots,|\{j\in\{1,\ldots,L\}\mid v_j\in E_t\}|).
\]  
}
Let $\leq$ be a monomial order (an example of a monomial order is the lexicographical order) on $\mathbb{N}^t$.  
The $t$-\emph{distance} $d_t$ between two vertices $u$ and $v$ of $G$ is defined as  
\[
  d_t(v,u)=\min_{\leq}\{\revC{l_t(\xi)}{\boldsymbol{l_t(\xi)}}\mid \text{$\xi$ is a walk between $u$ and $v$}\}.
\]  
Then, a $t$-distance-regular is defined as \revC{follows}{follow}:
 
\begin{df}[$t$-distance-regular]
A graph $G$ is $t$-\emph{distance-regular} if there exists an edge partition $\{E_i\}_{i=1}^t$ and a monomial order $\leq$ on $\mathbb{N}^t$ such that for any $\bold{a},\bold{b} \in \mathbb{N}^t$, the number of vertices that are at $t$-distance $\bold{a}$ from $u$ and $t$-distance $\bold{b}$ from $v$ depends only on the $t$-distance between $u$ and $v$.      
\end{df}

It is known that \revC{an}{a} $t$-distance-regular graph and a multivariate $P$-polynomial association scheme are in one-to-one correspondence, see \revC{\cite{BVZZ2023}}{Bernard, Crampe, Vinet, Zaimi, and Zhang
~\cite{BVZZ2023}}.

Let $G$ be a $t$-distance-regular graph with vertex set $V$ and adjacency matrix $A$.
Then, there exists a symmetric association scheme $\mathfrak{X}=(V,\{A_i\}_{i=0}^d)$ such that there exist $e_1,e_2,\dots,e_t\in \{0,1,\dots,d\}$ such that 
\[
  A=\sum_{i=1}^{t}A_{e_i}.
\]
Therefore, $G$ is also $f$-equitable with quotient matrix 
\[
  q_{ij}=\sum_{x=1}^{t}p_{ij}^{e_x},
\]
where $p_{ij}^{e_x}$ are the intersection numbers of $\mathfrak{X}$.
Hence, using Theorem \ref{thm:eqRw}\revC{}{,} we can obtain the average hitting times of a $t$-distance-regular graph as follows.

\begin{cor}
  Let $G$ be a $t$-distance-regular graph with adjacency matrix $A$. Let $\mathfrak{X}=(V,\{A_i\}_{i=0}^d)$ be a symmetric association scheme corresponding to the $t$-distance matrices of $G$,
  and let $p_{ij}^k$ be the intersection numbers of $\mathfrak{X}$.
  Define the matrix $B$ \revA{}{indexed by $\{0,\ldots,d\}$,} with entries given by
  \[
    q_{ij}=\sum_{x=1}^{t} p_{ij}^{e_x},
  \]
  where $e_x$ satisfies 
  \[
    A=\sum_{x=1}^{t}A_{e_x}.
  \]
  Then, for any vertices $u$ and $v$ in $G$,
  \[
    H(G; (v,u)) = \mathbf{H}(B_{-0})_{f(v,u)}.
  \]  
\end{cor}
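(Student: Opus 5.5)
The approach is to show that, for every vertex $v$, the partition
\[
\pi(v)=\{F_0(v),\ldots,F_d(v)\}
\]
is a stabilized equitable partition of $G$ centered at $v$, and that its quotient matrix is the matrix $B=(q_{ij})$, independent of $v$. Once this is in place, $G$ is $f$-equitable, and the formula follows from Theorem~\ref{thm:equi_old} (equivalently Theorem~\ref{thm:stabHt}) together with Equation~(\ref{eq:ht}).

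First, since $A_0=I$, we have $F_0(v)=\{v\}$, so $\pi(v)$ is centered at $v$. By the correspondence of Bernard, Crampe, Vinet, Zaimi and Zhang, $A=\sum_{x=1}^t A_{e_x}$, so $G$ is the edge-disjoint union of the relation graphs $G_{e_1},\ldots,G_{e_t}$. For $u\in F_j(v)$, the number of $G$-neighbours of $u$ lying in $F_i(v)$ is then
\[
\sum_{x=1}^t |G_{e_x}(u)\cap F_i(v)|.
\]

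Next, I apply the fact quoted from Godsil and Martin: $\pi(v)$ is equitable for each $G_k$, with quotient matrix $P^k=(p^k_{ij})$ independent of $v$. Hence each summand above equals $p^{e_x}_{ij}$, a number depending only on $i$ and $j$. Summing gives $q_{ij}$, so $\pi(v)$ is equitable for $G$ with quotient matrix $B$, the same for every $v$. This verifies the three conditions for $G$ to be $f$-equitable with quotient graph $G_B$.

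Theorem~\ref{thm:equi_old} then gives $H(G;(v,u))=H(G_B;(0,f(v,u)))$. Equation~(\ref{eq:ht}) applied to the quotient graph turns this into a hitting-time vector of the form $\mathbf{H}(T(B)_{-0})_{f(v,u)}$.

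The main obstacle is reconciling this with the stated formula, which uses $B_{-0}$ rather than $T(B)_{-0}$. Since $G$ is $k$-regular with $k=\sum_x k_{e_x}$, every column of $B$ sums to $k$, so $T(B)=\tfrac1k B$. The statement should therefore be read with this normalization, or with $B$ understood as already normalized. Theorem~\ref{thm:eqRw} confirms the same matrix on the maximal entropy side, since $\boldsymbol{\nu}$ is constant and so $B^*=B$. I would also check the index convention: $q_{ij}$ should count neighbours in class $i$ of a vertex in class $j$, so that $B$ matches the column-stochastic convention of $T(\cdot)$. If the convention for $p^k_{ij}$ runs the other way, transpose before applying $\mathbf{H}$.
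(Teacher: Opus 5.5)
Your argument follows the paper's proof. From $A=\sum_x A_{e_x}$, the partition $\pi(v)$ is a stabilized equitable partition of $G$ whose quotient matrix is the sum of those of the relation graphs $G_{e_x}$ and does not depend on $v$. So $G$ is $f$-equitable and Theorem~\ref{thm:equi_old} applies. You are also right that the formula must be read as $\mathbf{H}(T(B)_{-0})_{f(v,u)}$ with $T(B)=\tfrac1k B$; the paper's statement silently omits this normalization.

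On your index caveat: under the standard convention for intersection numbers, a vertex $u\in F_j(v)$ has exactly $p^{j}_{ie_x}=\tfrac{k_{e_x}}{k_j}\,p^{e_x}_{ij}$ neighbours of $G_{e_x}$ in $F_i(v)$, where $k_j=|F_j(v)|$. The correct entries are therefore $q_{ij}=\sum_x p^{j}_{ie_x}$. In general this is neither $\bigl(\sum_x p^{e_x}_{ij}\bigr)$ nor its transpose, so your suggested transpose fix does not work. The paper's identification $P^k=(p^k_{ij})$ has the same issue under that convention.
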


%%%%%%%%%%%%%%%%%%%%%%%%%%%%%%%%%%%%%%%%%%%%%%%%%%%%%%%%%%%%%%%%%%%%%%%%%%%%%%%%%%%%%%%%%%
\subsection{The average hitting time of cone graphs}\label{sec:cone}
%%%%%%%%%%%%%%%%%%%%%%%%%%%%%%%%%%%%%%%%%%%%%%%%%%%%%%%%%%%%%%%%%%%%%%%%%%%%%%%%%%%%%%%%%%

In this section, we prove the average hitting time of cone graphs in both a simple random walk and a maximal entropy random walk.

Unlike \revC{}{what} we saw for some generalizations of distance-regular graphs, a cone graph is generally neither regular nor biregular. Therefore, the average hitting times by the two types of random walks (simple random walk and maximal entropy random walk) differ.

\begin{df}\label{df:cone}
  The \emph{cone graph} over a graph $G$, denoted as $G^+$, is the graph whose vertex set consists of the vertices of $G$ and an additional vertex $\revC{\infty}{a}$. 
  Its edge set includes all edges of $G$ as well as edges of the form $\{\revC{\infty}{a},v\}$, where $v$ is any vertex in $G$. 
\end{df}

If $G$ has certain regularity properties, then we can compute the average hitting times in $G^+$. For instance, if $G$ is $k$-regular, then $G^+$ is a pseudo-distance-regular graph around a vertex, and we can use Theorem \ref{thm:stabHtW} to obtain the average hitting time from a vertex to $\revC{\infty}{a}$. Also, if $G$ is $f$-equitable, then $G^+$ is weakly-$f$-equitable, and we can use Theorem \ref{thm:equiW} to show the average hitting times in $G^+$.

First, we consider the case when $G$ is $k$-regular. In this case, $G^+$ is pseudo-distance-regular around the vertex $\revC{\infty}{a}$, because $P_{\revC{\infty}{a}}=\{\G_{0}(\revC{\infty}{a}),\G_1(\revC{\infty}{a})\}=\{\{\revC{\infty}{a}\},V(G)\}$ is an equitable partition, and using our framework\revC{}{,} we can obtain the following result.

\begin{thm}\label{thm:conePHT}
  Let $G$ be a $k$-regular graph with order $n$, and let $G^+$ be a cone graph over $G$.
  Then,
  \begin{align*}
    H(G^+;(\revC{\infty}{a},v))&=k+1,\\
    H_m(G^+;(\revC{\infty}{a},v))&=\frac{\lambda_1^2}{n},
  \end{align*}
  where $v$ is any vertex in $G$ and $\lambda_1=\frac{k}{2}+\sqrt{n+\frac{k^2}{4}}$.
\end{thm}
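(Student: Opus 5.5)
The plan is to apply Theorem~\ref{thm:stabHt} and Theorem~\ref{thm:stabHtW} to the two-block partition $P_a=\{\{a\},V(G)\}$ of $G^+$. Each reduces the computation to a two-vertex quotient graph in which the walk started in $V(G)$ either jumps to $\{a\}$ or stays in $V(G)$. The hitting time is then the mean of a geometric random variable.

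\emph{Step 1: the partition is stabilized equitable.} I will check that $P_a$ is stabilized equitable centered at $a$. The vertex $a$ has $0$ neighbours in $\{a\}$ and $n$ in $V(G)$. Every $v\in V(G)$ has exactly $1$ neighbour in $\{a\}$ and $k$ neighbours in $V(G)$, since $G$ is $k$-regular. So the quotient matrix, indexed so that column $j$ is the source block, is
\[
B=\begin{pmatrix}0&1\\ n&k\end{pmatrix}.
\]
By Theorem~\ref{thm:stabHt}, $H(G^+;(a,v))=H(G_B;(V_0,V_1))$. In $G_B$, a walk at $V_1$ moves to $V_0$ with probability $\frac{1}{k+1}$ and stays at $V_1$ with probability $\frac{k}{k+1}$. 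The hitting time is therefore geometric with mean $k+1$.

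\emph{Step 2: the Perron data.} For the maximal entropy walk I need the Perron eigenvector of $G^+$. I will lift the positive eigenvector of $B^\top$ to a vector $\boldsymbol\nu$ with $\nu_a=x$ and $\nu_v=y$ for all $v\in V(G)$. The eigenvalue equations are $\lambda x=ny$ and $\lambda y=x+ky$. Eliminating $x$ gives $\lambda(\lambda-k)=n$, whose positive root is $\lambda=\frac{k}{2}+\sqrt{n+\frac{k^2}{4}}$. With this $\lambda$ we get $x=(\lambda-k)y>0$. Since $\boldsymbol\nu$ is a positive eigenvector of the adjacency matrix of $G^+$, Perron--Frobenius uniqueness shows it is the Perron eigenvector and that $\lambda=\lambda_1$. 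This step is the only non-mechanical point: the argument hinges on the fact that a positive eigenvector must be the Perron one.

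\emph{Step 3: the weight-quotient walk.} By Theorem~\ref{thm:eqRw}, $P_a$ is also weight-equitable. I will compute the weight-intersection numbers directly from the definition rather than fuss with index conventions. For $v\in V(G)$, the weight to $\{a\}$ is $\nu_a/\nu_v=\lambda_1-k$ and the weight to $V(G)$ is $k$. These sum to $\lambda_1$, as they must. By Theorem~\ref{thm:stabHtW} (equivalently its proof), the walk from $V_1$ enters $V_0$ at each step with probability $\frac{\lambda_1-k}{\lambda_1}$. The expected hitting time is therefore
\[
H_m(G^+;(a,v))=\frac{\lambda_1}{\lambda_1-k}=\frac{\lambda_1^2}{\lambda_1(\lambda_1-k)}=\frac{\lambda_1^2}{n},
\]
using $\lambda_1(\lambda_1-k)=n$ from Step 2.
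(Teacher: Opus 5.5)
Your proof is correct and follows essentially the same route as the paper. Both use the two-block partition $\{\{a\},V(G)\}$ with quotient matrix $\begin{pmatrix}0&1\\ n&k\end{pmatrix}$, the Perron vector $(\lambda_1-k,1,\ldots,1)$ and the weight-quotient matrix $\begin{pmatrix}0&\lambda_1-k\\ \lambda_1&k\end{pmatrix}$; the paper then invokes Theorem~\ref{thm:pseudo}, the matrix form of Theorems~\ref{thm:stabHt} and~\ref{thm:stabHtW}, where you read off the mean of a geometric variable directly. Your version also justifies the Perron vector via Perron--Frobenius uniqueness and computes the weight-intersection numbers from the definition, giving the correct intermediate value $\lambda_1/(\lambda_1-k)$ (the paper's displayed $-\frac{k-\lambda_1}{\lambda_1}$ is missing an inverse).
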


\begin{proof}
  
  Let $A$ be the adjacency matrix of $G$.
  Then, the adjacency matrix of $G^+$ is
  \[
    A^+=\begin{pmatrix}
      0 &\allone\\
      \allone^\top &A
    \end{pmatrix}.  
  \]
  Therefore, the maximum eigenvalue of $A^+$ is $\lambda_1$, and 
  \[
    \revC{\nu}{\boldsymbol{\nu}}=\revC{(\sqrt{n+\frac{k^2}{4}}-\frac{k}{2},1,\ldots,1)}{\left(\sqrt{n+\frac{k^2}{4}}-\frac{k}{2},1,\ldots,1\right)}=(\lambda_1-k,1,\ldots,1)
  \]
  is \revC{}{the} eigenvector of $A^+$ corresponding to $\lambda_1$.
  Additionally, $G^+$ is pseudo-distance-regular and the quotient matrix of $P_{\revC{\infty}{a}}=\{\{\revC{\infty}{a}\},V(G)\}$ is
  $B(\revC{\infty}{a})=\begin{pmatrix}
    0 & 1\\
    n&k
  \end{pmatrix}.$
  Then, it follows that 
  \begin{align*}
    D_\nu B(\revC{\infty}{a})D_\nu^{-1}&=\begin{pmatrix}
      \lambda_1-k &0\\
      0&1
    \end{pmatrix}\begin{pmatrix}
      0 &1\\
      n&k
    \end{pmatrix}\begin{pmatrix}
      \frac{1}{\lambda_1-k} &0\\
      0&1
    \end{pmatrix}\\
    &=\begin{pmatrix}
      0 &\lambda_1-k\\
      \lambda_1&k
    \end{pmatrix}.
  \end{align*}
  Finally, using Theorem \ref{thm:pseudo}\revC{}{,} we obtain
  \begin{align*}
    H(G^+;(\revC{\infty}{a},v))&=-(\frac{k}{k+1}-1)^{-1}\\
    &=k+1,\\
    H_m(G^+;(\revC{\infty}{a},v))&=-\frac{k-\lambda_1}{\lambda_1}\\
    &=\frac{\lambda_1^2}{n}.\qedhere
  \end{align*}

\end{proof}
 
 Rao \cite{R2013} also investigated hitting times of graphs with symmetry by using graph \revC{autormophisms}{automorphisms}. However, we observed that the condition for the automorphism in the original Theorem  2.1 is incorrect. We state the correct statement below, since this will be used later on.

    \begin{thm}[\cite{R2013}, Theorem 2.1]\label{thm:R2013}
    Let $G$ be a connected graph, let $v$ be a vertex of $G$, and let $u$ be a neighbor of $v$.  
    If, for any vertex $x\in G(v)$, there exists an automorphism $\sigma$ of $G$ such that $\sigma(v)=v$ and $\sigma(x)=u$, then the average hitting time from $u$ to $v$ is $\frac{2e}{k}-1$, where $e$ is the number of edges in the graph and $k$ is the degree of $v$.
    \end{thm}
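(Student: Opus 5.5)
We want $H(G;(v,u)) = \frac{2e}{k}-1$ for the simple random walk. We will use the return-time identity together with the symmetry at $v$.

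\textbf{Step 1: return time to $v$.} The simple random walk on a connected graph has stationary distribution $\pi_x=\deg(x)/2e$. By Kac's formula (the standard renewal argument), the expected return time to $v$ equals $1/\pi_v = 2e/k$. Conditioning on the first step gives
\[
\frac{2e}{k} = 1 + \frac{1}{k}\sum_{x\in G(v)} H(G;(v,x)).
\]

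\textbf{Step 2: all neighbours have the same hitting time.} Fix $x\in G(v)$. By hypothesis there is an automorphism $\sigma$ with $\sigma(v)=v$ and $\sigma(x)=u$. Automorphisms preserve the transition matrix $T(A)=AD^{-1}$, since they preserve adjacency and degrees. Hence $\sigma$ maps walks from $x$ that first hit $v$ at time $t$ bijectively onto walks from $u$ that first hit $\sigma(v)=v$ at time $t$, with equal probabilities. So $H(G;(v,x))=H(G;(v,u))$.

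Equivalently, in the paper's language, the orbits of the stabilizer of $v$ form a stabilized equitable partition in which all of $G(v)$ lies in a single cell. One could then apply Theorem~\ref{thm:stabHt}, but the direct coupling argument is simpler.

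\textbf{Step 3: conclude.} Substituting Step 2 into the identity from Step 1 gives $\frac{2e}{k}=1+H(G;(v,u))$. Therefore $H(G;(v,u))=\frac{2e}{k}-1$.

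The main point to stress is why this hypothesis is the right one. The automorphism must fix $v$ and carry an arbitrary neighbour $x$ onto $u$, which means the stabilizer of $v$ acts transitively on $G(v)$. A condition that does not fix $v$ would not preserve the target of the hitting time, and Step 2 would fail. The only other nonroutine ingredient is Kac's formula. If we want to stay self-contained, we can derive it from the linear system in~\eqref{eq:ht}: sum the hitting-time equations against the weights $\pi$, and check directly that $\pi$ is stationary for $AD^{-1}$.
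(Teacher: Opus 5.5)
Your proof is correct and is essentially the argument the paper relies on, namely Rao's. The return time to $v$ is $1/\pi_v = 2e/k$; a first-step decomposition writes it as $1+\frac{1}{k}\sum_{x\in G(v)}H(G;(v,x))$; and the symmetry hypothesis makes all these neighbour hitting times equal. Your remark that the orbits of the stabilizer of $v$ form a stabilized equitable partition with $G(v)$ as a single cell is exactly how the paper generalizes the statement in Theorem~\ref{thm:genR}, via Theorem~\ref{thm:stabHt}.
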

    When $G$ is vertex-transitive, we can apply Theorem \ref{thm:R2013} and obtain $H(G^+;(\revC{\infty}{a},v))=d+1$.
    Actually, Theorem \ref{thm:R2013} can be generalized using a stabilized equitable partition (see Definition \ref{def:stabilizedequitablepartition}) as follows.
    
    \begin{thm}\label{thm:genR}
        Let $G$ be a connected graph, let $v$ be a vertex of $G$, and let $u$ be a neighbor of $v$.  
        If there exists a stabilized equitable partition $P$ centered at $v$ such that  
    \[
        P=\{V_0=\{v\},V_1=G(v),V_2,\dots,V_r\},
    \]  
        then the average hitting time from $u$ to $v$ is $\frac{2e}{k}-1$, where $e$ is the number of edges in the graph and $k$ is the degree of $v$.  
    \end{thm}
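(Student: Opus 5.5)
Since $P$ is a stabilized equitable partition centered at $v$ whose first block is exactly $V_1=G(v)$, Theorem~\ref{thm:stabHt} makes the average hitting time from any vertex of $V_1$ to $v$ a single number $h:=H(G_B;(V_0,V_1))$. In particular $H(G;(v,x))=h$ for every neighbor $x$ of $v$, and so $H(G;(v,u))=h$. The task is therefore to compute the common value $h$, and I would do this with the standard return-time identity instead of through the quotient matrix.

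The key steps are as follows. First, recall that for the simple random walk on the connected graph $G$, the stationary distribution is $\pi_x=\deg(x)/2e$. By Kac's formula, the expected return time to $v$ is $1/\pi_v=2e/k$. Second, condition on the first step of a walk started at $v$. It moves to a uniformly random neighbor $x\in G(v)$ and then needs, in expectation, $H(G;(v,x))$ further steps to hit $v$. This gives
\[
  \frac{2e}{k}=1+\frac{1}{k}\sum_{x\in G(v)}H(G;(v,x)).
\]
Third, substitute $H(G;(v,x))=h$ for all $x\in G(v)$. The identity becomes $2e/k=1+h$, so $H(G;(v,u))=h=\frac{2e}{k}-1$.

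The only point needing care is the first step: justifying that all neighbors share the same hitting time. This is exactly where the hypothesis $V_1=G(v)$ is used. Theorem~\ref{thm:stabHt} gives equality only within a block, so the neighbors of $v$ must lie in one block. If they were split over several blocks, the averaging would not collapse.

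Kac's formula is standard. If I wanted to avoid quoting it, I could derive it from the first-step equations together with $\pi^\top T(A)=\pi^\top$. Alternatively, the whole computation could be carried out inside the quotient graph $G_B$, since the lumped chain has stationary weights $\sum_{x\in V_i}\deg(x)/2e$. I expect this alternative to be no simpler.

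This argument also explains why Rao's automorphism condition suffices: the orbits of the stabilizer of $v$ form such a partition, and the required condition is that the stabilizer act transitively on $G(v)$.
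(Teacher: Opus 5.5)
Your proof is correct and follows essentially the same route as the paper. Theorem~\ref{thm:stabHt}, applied with $V_1=G(v)$, makes all neighbors of $v$ share one hitting time to $v$. Rao's return-time argument then finishes the proof: the paper only cites it, while you spell it out via Kac's formula and the first-step identity $\frac{2e}{k}=1+\frac{1}{k}\sum_{x\in G(v)}H(G;(v,x))$.
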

    \begin{proof}
        From Theorem \ref{thm:stabHt}, for every vertex $x\in G(v)$, we obtain $H(G;(v,u))=H(G;(v,x))$.  
        Then, using the same proof argument as in Theorem \ref{thm:R2013}, the result is obtained.
    \end{proof}
    Note that when the conditions on $u$ and $v$ in Theorem \ref{thm:R2013} hold, there exists a stabilized equitable partition $P$ that satisfies the conditions of Theorem \ref{thm:R2013}. 
    We can also calculate $H(G^+;(\revC{\infty}{a},v))$ using Theorem \ref{thm:genR} even if $G$ is not vertex-transitive.

    The next result shows that we can also apply Theorem \ref{thm:genR} to graphs that are obtained from an association scheme.
    \begin{cor}\label{cor:asHt}
        Let $\mathfrak{X} = (V, \{A_i\}_{i=0}^d)$ be a symmetric association scheme, and let $G$ be a graph with adjacency matrix $A_i$, where $1 \leq i \leq d$.  
        If $G$ is connected, then for any adjacent vertices $u$ and $v$ in $G$,  
        $H(G; (v, u)) = |V| - 1$.  
    \end{cor}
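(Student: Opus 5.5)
The plan is to apply Theorem~\ref{thm:genR} to $G$ with the partition
\[
\pi(v)=\{F_0(v),F_1(v),\ldots,F_d(v)\}
\]
coming from the association scheme, after relabelling so that the block $F_i(v)=G(v)$ comes directly after $F_0(v)=\{v\}$. Theorem~\ref{thm:genR} then gives $H(G;(v,u))=\frac{2e}{k}-1$, where $e$ is the number of edges of $G$ and $k$ is the degree of $v$. Once that is in place, the statement reduces to computing $e$ in terms of $|V|$ and $k$.

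First I check the hypotheses of Theorem~\ref{thm:genR}. As recalled above from Godsil and Martin~\cite{gm1995}, for a symmetric association scheme and every vertex $v$, the partition $\pi(v)$ is equitable for each graph $G_i$. Its quotient matrix is $P^i=(p^i_{jl})$, which depends only on $i$ and not on $v$. In particular $\pi(v)$ is an equitable partition of $G$ (the graph with adjacency matrix $A_i$). Its first block is $F_0(v)=\{v\}$, since $A_0=I$. The neighbourhood of $v$ in $G$ is exactly $F_i(v)=\{x : (A_i)_{vx}=1\}$. After reordering the blocks as
\[
V_0=F_0(v),\quad V_1=F_i(v)=G(v),\quad \text{then the remaining } F_j(v),
\]
we obtain a stabilized equitable partition centered at $v$ of the shape required in Theorem~\ref{thm:genR}. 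Connectivity of $G$ is a hypothesis of the corollary, and $u\in G(v)$. Hence $H(G;(v,u))=\frac{2e}{k}-1$.

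Next I compute $e$. Since $A_i$ is a relation of the scheme, every row of $A_i$ has the same sum $k=p^0_{ii}$ (the valency), so $G$ is $k$-regular. Then $2e=k|V|$, and therefore
\[
H(G;(v,u))=\frac{k|V|}{k}-1=|V|-1.
\]

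The step needing the most care is justifying that $\pi(v)$ is equitable for $G_i$ itself, and not merely for the distance partition. I would derive it directly from $A_jA_i=\sum_l p^l_{ji}A_l$: the $(x,v)$-entry of this identity counts the neighbours in $G_i$ of a vertex $x\in F_l(v)$ that lie in $F_j(v)$, and the identity shows this count is $p^l_{ji}$, independent of $x$. Symmetry of the scheme ensures $G$ is undirected, so that $e$ and the degree are well defined.
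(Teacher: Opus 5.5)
Your proposal is correct and follows the paper's proof. You apply Theorem~\ref{thm:genR} to $\pi(v)$ with $F_i(v)=G(v)$ placed as $V_1$, then use the $k$-regularity of $G$ to turn $\frac{2e}{k}-1$ into $|V|-1$, a step the paper leaves implicit. The only slip is index bookkeeping: the $(x,v)$-entry of $A_jA_i$ counts $F_j(x)\cap F_i(v)$, so to count the $G_i$-neighbours of $x$ in $F_j(v)$ you need the $(v,x)$-entry, or equivalently the commutativity $A_iA_j=A_jA_i$ of the symmetric scheme.
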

    \begin{proof}
         This result is obtained using Theorem \ref{thm:genR} and the fact that, for any vertex $v$ in $G$, there exists a partition $\pi(v)$ such that 
    \[
        \pi(v)=\{V_0=\{v\},V_1=G(v),V_2,\dots,V_r\},
    \]
    where the construction of $\pi(v)$ is the same as in Section \ref{sec:as}.
    \end{proof}

Next, we consider the case when $G$ is $f$-equitable. If $G$ is $f$-equitable, $G^+$ has an even stronger property.

\begin{propo}\label{thm:cone}
  Let $G$ be a $k$-regular and $f$-equitable graph with quotient matrix $B$.
  Then, the cone graph over $G$ is weakly $f^+$-equitable, where $f^+$ is the function 
  \[
    f^+(v,u)=\begin{cases}
      f(v,u) & u\neq \revC{\infty}{a}\land v\neq \revC{\infty}{a},\\
      \revC{\infty}{a}_0 & u=v=\revC{\infty}{a},\\
      \revC{\infty}{a}_1 & \text{otherwise},
    \end{cases}
  \] 
  \revC{}{where $a_0$ and $a_1$ are some values not in the image of $f$.}
  \revC{and}{Additionally,} it has the following quotient matrices  
  \begin{align*}
    B(x_0)&=\left(\begin{array}{ccc|c}
      & & & k_{x_0}\\
      & {\Huge B} & & \vdots\\
      &&&k_{x_r}\\
      \hline 
      1 & \ldots & 1 &0
      \end{array}\right)
    ,\\
    B(\revC{\infty}{a}_0)&=\begin{pmatrix}
      0 & 1\\
      n&k
    \end{pmatrix},    
  \end{align*}
  where $k_{x_i}$ is the size of $F_{x_i}(v)$ for any vertex $v\in G$.
\end{propo}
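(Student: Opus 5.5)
We verify the definition of weakly $f^+$-equitable directly: for every vertex $o$ of $G^+$ the partition $P_o$ generated by $f^+$ must be stabilized at $o$ and equitable (hence weight-equitable, so weakly weight-$f^+$-equitable), and its quotient matrix must depend only on the color $f^+(o,o)$. The color set of $f^+$ is $\{c, a_0\}$, where $c$ is the unique color of $f$. The relation set is $\{x_1,\ldots,x_r,a_1\}$. There are two kinds of vertices $o$, and we treat them separately.

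\emph{Case $o=a$.} Here $f^+(a,a)=a_0$, and $f^+(a,u)=a_1$ for every $u\in V(G)$. So $P_a=\{\{a\},V(G)\}$, with the remaining blocks $F^+_{x_i}(a)$ empty, which is allowed by the earlier remark on empty blocks. Equitability is immediate. The vertex $a$ has $n$ neighbours in $V(G)$ and none in $\{a\}$. Each $u\in V(G)$ has exactly one neighbour in $\{a\}$ and $k$ neighbours in $V(G)$, because $G$ is $k$-regular. This gives the quotient matrix $B(a_0)$ of the statement, which is the same computation as in the proof of Theorem~\ref{thm:conePHT}.

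\emph{Case $o=v\in V(G)$.} Here $f^+(v,v)=f(v,v)=c=x_0$. For $u\in V(G)$ we have $f^+(v,u)=f(v,u)$, and $f^+(v,a)=a_1$. So $P_v=\{F_{x_0}(v),F_{x_1}(v),\ldots,F_{x_r}(v),\{a\}\}$, where the $F_{x_i}(v)$ are the blocks of the $f$-partition of $G$. We check the rows of the quotient matrix block by block.
\begin{itemize}
\item For $w\in F_{x_j}(v)$, the number of neighbours of $w$ in $F_{x_i}(v)$ within $G^+$ equals the number within $G$, since $a\notin F_{x_i}(v)$. This number is $b_{ij}$ because $G$ is $f$-equitable, which gives the block $B$.
\item Each such $w$ has exactly one neighbour in $\{a\}$, which gives the row of $1$'s.
\item The vertex $a$ is adjacent to every vertex of $G$, so it has $|F_{x_i}(v)|$ neighbours in $F_{x_i}(v)$, and $0$ in $\{a\}$. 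This gives the last column $(k_{x_0},\ldots,k_{x_r},0)^\top$.
\end{itemize}
Hence $P_v$ is a stabilized equitable partition with quotient matrix $B(x_0)$.

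The only point needing care is that this matrix does not depend on $v$, i.e.\ that $k_{x_i}=|F_{x_i}(v)|$ is independent of $v$. We get this from the $f$-equitability of $G$. The quotient graph is the same (up to the labelling by $f$-values) for every $v$, and $F_{x_0}(v)=\{v\}$. Counting edges between consecutive blocks, $|F_{x_i}(v)|\,b_{ji}=|F_{x_j}(v)|\,b_{ij}$. Since the quotient graph is connected, this recursion starting from $|F_{x_0}(v)|=1$ determines every $|F_{x_i}(v)|$ from the entries of $B$ alone. Alternatively, $|F_{x_i}(v)|$ can be read off as the $(x_i,x_0)$-type entry of the $f$-equitable structure. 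We expect this size-independence to be the main (though mild) obstacle, together with being careful that the labels match the blocks, which follows because $f$ assigns the labels.

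Finally, equitable partitions are weight-equitable by Theorem~\ref{thm:eqRw}, and the weight-quotient matrix $D_\nu B(c)D_\nu^{-1}$ depends only on the color, because $\nu$ is constant on $V(G)$ and on $\{a\}$. Both conditions of weakly $f^+$-equitable therefore hold, which finishes the proof.
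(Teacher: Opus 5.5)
Your proof is correct and follows the paper's own argument. For $v\in V(G)$ the $f^+$-partition is the $f$-partition of $G$ together with the block $\{a\}$, whose equitability and quotient matrix $B(x_0)$ you read off exactly as the paper does; for $v=a$ the partition $\{\{a\},V(G)\}$ gives $B(a_0)$ by $k$-regularity.

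You also make explicit two points the paper leaves implicit. First, $k_{x_i}=|F_{x_i}(v)|$ does not depend on $v$, and your edge-counting recursion $|F_{x_j}(v)|\,b_{ij}=|F_{x_i}(v)|\,b_{ji}$ over the connected quotient graph is the right justification for this. Your aside about reading $|F_{x_i}(v)|$ off an $(x_i,x_0)$ entry is not correct, however, since that entry counts only the neighbours of $v$ in $F_{x_i}(v)$. Second, the weight-quotient matrices depend only on the color, because the Perron vector of $G^+$ is constant on $V(G)$.
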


\begin{proof}
  Let $v$ be any vertex in $G$\revA{.}{, and define $F^+_{x_i}(v)\coloneqq\{u\in V\mid f^+(v,u)=x_i\}$.}
  We consider 
  \[
    P^+(v)=\{F^+_{x_0}(v),F^+_{x_1}(v),\ldots,F^+_{x_r}(v),F^+_{\revC{\infty}{a}_1}(v)\}.
  \]
  From the definition of $f^+$, it follows that $F^+_{x_i}(v)=F_{x_i}(v)$ and $F^+_{\revC{\infty}{a}_1}(v)=\{\revC{\infty}{a}\}$.
  Therefore,
  \[
      |G(u)\cap F^+_{x_i}(v)|=\begin{cases}
        q_{i,f(v,u)} & u\neq\revC{\infty}{a},\\
        k_{x_i} & u=\revC{\infty}{a}.
      \end{cases}
  \]
  This implies that $P^+(v)$ is equitable, and we can also obtain the entries of the quotient matrix.
 
  Since $G$ is $k$-regular, the partition $P^+(\revC{\infty}{a})=\{F^+_{\revC{\infty}{a}_0}(\revC{\infty}{a})=\{\revC{\infty}{a}\},F^+_{\revC{\infty}{a}_1}(\revC{\infty}{a})=V\}$ is equitable, and the quotient matrix of $P^+(\revC{\infty}{a})$ is $B(\revC{\infty}{a}_0)$.
  Thus, the proof is completed.
\end{proof}

Using Proposition \ref{thm:cone}\revC{}{,} we can show the main result for cone graphs.

\begin{thm}\label{thm:coneHT}
  Let $G$ be a $k$-regular and $f$-equitable graph with quotient matrix $B$, and order $n$. 
  Define $\lambda_1=\frac{k}{2}+\sqrt{n+\frac{k^2}{4}}$ and 
  $B^+$ as follows
  \[
    B^+=\left(\begin{array}{ccc|c}
      & & & k_{x_0}\\
      & {\Huge B} & & \vdots\\
      &&&k_{x_r}\\
      \hline 
      1 & \ldots & 1 &0
      \end{array}\right).    
  \]
  Let $(B^*)^{\top}=D_\nu B^+D_\nu^{-1}$, $h_{x}=\bold{H}(T(B^+)_{-x_0})_x$ and $h'_x=\bold{H}(\frac{1}{\lambda_1}(B^*)^{\top}_{-x_0})_x$.
  Then,
  \begin{align*}
    H(G^+;(v,u))&=\begin{cases*}
      k+1 &$v=\revC{\infty}{a}$,\\
      h_{f^+(v,u)} & \text{otherwise},
    \end{cases*}\\
    H_m(G^+;(v,u))&=\begin{cases*}
      \frac{\lambda_1^2}{n}&$v=\revC{\infty}{a}$,\\
      h'_{f^+(v,u)} & \text{otherwise.}
    \end{cases*}
  \end{align*}
\end{thm}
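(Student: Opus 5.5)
The plan is to combine Proposition~\ref{thm:cone} with Theorem~\ref{thm:equi}, treating the two kinds of target vertex separately. By Proposition~\ref{thm:cone}, $G^+$ is weakly $f^+$-equitable with two colors. The color $a_0$ (target $a$) has quotient matrix $B(a_0)=\begin{pmatrix}0&1\\ n&k\end{pmatrix}$. The color $x_0$ (target any $v\in V(G)$) has quotient matrix $B^+$. Theorem~\ref{thm:equi} then expresses $H(G^+;(v,u))$ as $H(G_{c_v};(c_v,v_u))$ and $H_m(G^+;(v,u))$ as $H(G^*_{c_v};(c_v,v_u))$, where $c_v=f^+(v,v)$ and $v_u=f^+(v,u)$. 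It therefore remains only to evaluate these quotient-graph hitting times through Equation~(\ref{eq:ht}).

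For $v=a$ nothing new is needed. The partition $P^+(a)=\{\{a\},V(G)\}$ is exactly the one treated in Theorem~\ref{thm:conePHT}, so I would simply cite that theorem to get $k+1$ and $\frac{\lambda_1^2}{n}$. Both values hold for every $u\neq a$.

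For $v\neq a$ the color is $x_0$.
\begin{itemize}
\item \emph{Simple walk.} Theorem~\ref{thm:equi} together with Equation~(\ref{eq:ht}) gives $H(G^+;(v,u))=\bold{H}(T(B^+)_{-x_0})_{f^+(v,u)}=h_{f^+(v,u)}$. This covers $u=a$ as well, since $f^+(v,a)=a_1$ indexes the last row and column of $B^+$.
\item \emph{Maximal entropy walk.} The weight-quotient matrix is $D_\nu B^+D_\nu^{-1}$, where $\nu$ is the Perron vector of $G^+$, namely $\lambda_1-k$ on $a$ and $1$ on every vertex of $G$ (computed in the proof of Theorem~\ref{thm:conePHT}). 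Since $\nu$ is constant on each block of $P^+(v)$, Theorem~\ref{thm:eqRw} confirms this matrix is the weight-regular quotient matrix.
\end{itemize}

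The main obstacle is the matrix orientation in the maximal-entropy case, which is where the argument must be handled carefully. The proof of Theorem~\ref{thm:stabHtW} shows the block chain has transition matrix $\frac{1}{\lambda_1}$ times the weight-quotient matrix in column-stochastic form. So I would check directly that $\frac{1}{\lambda_1}D_\nu B^+D_\nu^{-1}$ has column sums $1$. This holds because $\nu^\top B^+ = \lambda_1\nu^\top$ blockwise, which follows from $A^+\nu=\lambda_1\nu$ and the equitability of $P^+(v)$. Having checked this, $T$ of the weight-quotient matrix equals $\frac{1}{\lambda_1}(B^*)^\top$ as defined in the statement. Equation~(\ref{eq:ht}) then gives $H_m(G^+;(v,u))=h'_{f^+(v,u)}$, which completes both cases.
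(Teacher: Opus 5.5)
Your proposal is correct and matches the paper's proof: cite Theorem~\ref{thm:conePHT} for $v=a$, and combine Proposition~\ref{thm:cone} with Theorem~\ref{thm:equi} and Equation~(\ref{eq:ht}) for $v\neq a$. The paper leaves implicit your extra check that $\frac{1}{\lambda_1}D_\nu B^+D_\nu^{-1}$ is column-stochastic, i.e.\ $\sum_i \nu_i b_{ij}=\lambda_1\nu_j$ from $A^+\nu=\lambda_1\nu$. That check is right, and it is what justifies reading $h'_x$ as $\bold{H}(T(D_\nu B^+D_\nu^{-1})_{-x_0})_x$, given the transpose ambiguity in the proof of Theorem~\ref{thm:stabHtW}.
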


\begin{proof}
  Since $G$ is $k$-regular, using Theorem \ref{thm:conePHT}, we obtain 
  \begin{align*}
    H(G^+;(\revC{\infty}{a},u))&=k+1,\\
    H_m(G^+;(\revC{\infty}{a},u))&=\frac{\lambda_1^2}{n}.
  \end{align*}

  From Proposition \ref{thm:cone}\revC{}{,} we know that $G^+$ is weakly-$f^+$-equitable.
  Therefore, when $v\neq\revC{\infty}{a}$, we apply Theorem \ref{thm:equi} and obtain
  \begin{align*}
    H(G;(v,u))&=h_{f^+(v,u)},\\
    H_m(G;(v,u))&=h'_{f^+(v,u)}.\qedhere
  \end{align*} 
\end{proof}

When $G$ is a cycle graph (distance-regular), then $G^+$ is a wheel graph. Therefore, we can use Theorem \ref{thm:coneHT} to obtain the average hitting times in wheel graphs, providing an alternative proof and an extension of the results by Yang \cite{Y2011}.

%%%%%%%%%%%%%%%%%%%%%%%%%%%%%%%%%%%%%%%%%%%%%%%%%%%%%%%%%%%%%%%%%%%%%%%%%%%%%
\subsection*{Acknowledgements}
%%%%%%%%%%%%%%%%%%%%%%%%%%%%%%%%%%%%%%%%%%%%%%%%%%%%%%%%%%%%%%%%%%%%%%%%%%%%%
Aida Abiad is supported by the Dutch Research Council (NWO) through the grant VI.Vidi.213.085.  

%%%%%%%%%%%%%%%%%%%%%%%%%%%%%%%%%%%%%%%%%%%%%%%%%%%%%%%%%%%

\end{document}